%% file: FullKR.tex
\documentclass{amsart}

\usepackage{stackengine}

\input{mymacros.tex}

\title{Khovanov-Rozansky homology over $\F[U,V]$}
\author{David Popovi\'c}
\date{June 2026}

\begin{document}
\begin{abstract}
    We define a `full' version of Khovanov-Rozansky homology: a chain complex $\CH(K)$ over $\F[U, V]$ whose chain homotopy type is a knot invariant and which has the property that setting $U = V = 0$ recovers the reduced Khovanov-Rozansky homology. We explore the algebraic structure of $\CH(K)$ and show that a variation of the structure theorem for knot Floer homology applies. This allows us to define counterparts to knot Floer concordance invariants $\tau$, $\epsilon$ and $\phi_j$ in the Khovanov-Rozansky setting.
\end{abstract}
\maketitle

\section{Introduction}
There are two types of modern knot homology theories -- those whose origin can be traced back to gauge theory and those that arose through categorification of quantum groups. Invariants inspired by gauge theory include monopole Floer homology \cite{kronheimer2007monopoles}, Heegaard Floer homology and knot Floer homology \cite{ozsvath2004holomorphic, ozsvath2004holomorphicKnots, rasmussen2003floer}, instanton Floer homology \cite{kronheimer2011knot}, and embedded contact homology \cite{hutchings2010embedded}. They generally admit a clearer geometric interpretation and have been defined for links in arbitrary $3$-manifolds whereas the extension of quantum homology theories such as Khovanov homology \cite{khovanov2000categorification}, Khovanov-Rozansky's $\mathfrak{sl}_n$ homology \cite{khovanov2004matrix} and Khovanov-Rozansky homology \cite{khovanov2008matrix} to this setting remains wide open. Quantum homology theories have also been computed in fewer infinite families of examples and are not understood as well geometrically. Nonetheless, they have been used to prove remarkable results for which no alternative proof is presently known, such as \cite{piccirillo2020conway}.

Due to the completely different origin, construction, and properties, relating different types of invariants tends to be a challenging task. Accordingly, whenever such a connection can be established, it tends to be very exciting. Among the most celebrated results in this direction are the Dowlin's spectral sequence \cite{dowlin2024spectral} from $\overline{Kh}(K)$ to $\widehat{\HFK}(K)$, Ozsv\'ath-Szab\'o's spectral sequence \cite{OS-HFKand4ballGenus} from $\overline{Kh}(L)$ to $\widehat{\HF}(\Sigma_2(L))$ and the recent spectral sequence from $\overline{H}(K)$ to $\widehat{\HFK}(K)$ by Beliakova, Putyra, Robert, and Wagner \cite{beliakova2022proof}.

One structural difference between the Floer theoretic and quantum invariants is that the former admit several different flavors -- $\widehat{\,}$ (hat), $-$ (minus), $+$ (plus), and the full version. The reduced versions of Khovanov-Rozansky $\mathfrak{sl}_n$ and Khovanov-Rozansky homologies are believed to correspond to the hat version while the other flavors have not been uncovered yet.

\vspace{1em}

Motivated by the additional data of higher differentials in the minus version of knot Floer homology, Dunfield, Gukov and Rasmussen \cite[Conjecture 3.1]{dunfield2006superpolynomial} conjectured the existence of a $\Z$-indexed family $d_n$ of higher differentials. Through some slow but steady progress, their question has guided the research in the field for the next two decades. The positive differentials and $d_{-1}$ were constructed by Rasmussen \cite{rasmussen2016some} and the existence of the negative differentials more generally was confirmed after the symmetry of Khovanov-Rozansky homology was established in \cite{oblomkov2020soergel}. The rest of the structure seems to be slightly less like what was envisioned in \cite{dunfield2006superpolynomial}. In particular, it seems that $d_0$ is not a differential and that the family of these operators is more naturally indexed by $\N^2 \setminus \{(0, 0)\}$, as opposed to $\Z$. The positive differentials $d_n$ should be understood as $d_{n|0}$, the negative differentials should be understood as $d_{0|n}$, and $d_0$ fits into this framework as $d_{1|1}$. Based on the main result of \cite{gorsky2024tautological}, the operators $d_{i|j}$ have been constructed recently by Chandler and Gorsky \cite{chandler2024structures}, but it is unknown whether they satisfy $d^2_{i|j} = 0$ in general. Inspired by the similarities with knot Floer homology, we conjecture that this is not always the case. In this paper, we explain how to lift this data of higher differentials $d_{n|0}$ and operators $d_{i|j}$ to a full version of Khovanov-Rozansky homology.

We work over a field $\F$ of characteristic $\mathrm{char}(\F) = 0$.
\begin{theorem}\label{thm:full HOMFLY complex}
Let $K \subset S^3$ be a knot. There is a $\Z\oplus\Z\oplus\Z$-graded finitely generated free chain complex $(\CH(K), \partial)$ over $\F[U,V]$ such that
\begin{enumerate}
    \item the isomorphism type of $\CH(K)$ is a knot invariant, and
    \item $H_*(\CH(K) \otimes_{\F[U,V]} \F) \cong \widehat{\mathit{H}}(K)$ as $\Z\oplus\Z\oplus\Z$-graded vector spaces where $\widehat{\mathit{H}}(K)$ denotes the reduced Khovanov-Rozansky homology.
\end{enumerate}
\end{theorem}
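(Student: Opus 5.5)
The plan is to build $\CH(K)$ from the $y$-ified (or "curved") Khovanov--Rozansky complex and then reduce it to a finite free complex over $\F[U,V]$ using homological perturbation.

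\textbf{Step 1: the deformed complex.} Start from a braid closure presentation of $K$. Take the Rouquier complex $C(\beta)$ of Soergel bimodules over $R=\F[x_1,\dots,x_n]$. By the work of Gorsky--Hogancamp, it admits a $y$-ification: there is a curved complex over $R[y_1,\dots,y_n]$ with curvature $\sum_i (x_i - x_{\sigma(i)})y_i$. Because $K$ is a knot, the braid permutation $\sigma$ is an $n$-cycle. Passing to Hochschild homology and its Koszul-dual direction, I would identify all $x_i$ with a single variable and all $y_i$ with a single variable up to homotopy. Reducing at one strand kills these, leaving two global variables.

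\textbf{Step 2: matching the variables.} Set $U$ to be the image of the $x$-variable and $V$ the image of the $y$-variable. This can be arranged so that the result is a genuine differential: the curvature vanishes on closure for a knot. It also yields a $\Z^3$-graded chain complex over $\F[U,V]$, with $U$ and $V$ of appropriate tridegrees. The $V$-direction should be realized via the symmetry of \cite{oblomkov2020soergel}, so that the roles of $U$ and $V$ are exchanged.

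\textbf{Step 3: finiteness and reduction.} The resulting complex is free over $\F[U,V]$ but possibly of infinite rank. Since $\F[U,V]$ is graded with positive-degree variables and each graded piece is finite-dimensional, I would apply graded homological perturbation. Concretely, choose a deformation retract of $C\otimes\F$ onto its homology $\widehat H(K)$, which is finite-dimensional over the field $\F$ of characteristic $0$. Then lift it to a free complex over $\F[U,V]$ with generators in bijection with a basis of $\widehat H(K)$ and a differential whose constant part is zero. This simultaneously gives finite generation and item (2), since setting $U=V=0$ returns the reduced complex with zero differential, i.e.\ $\widehat H(K)$.

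\textbf{Step 4: invariance.} I would use Markov moves. Braid relations hold up to homotopy for $y$-ified Rouquier complexes, and conjugation invariance comes from the trace property of Hochschild homology. The stabilization move requires showing that adding a strand with its own $x,y$ variables gives a homotopy-equivalent complex after identifying variables.

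Chain homotopy equivalences between reduced complexes (zero constant part of the differential) over $\F[U,V]$ are isomorphisms, by a Nakayama-type argument in the graded setting. Hence the isomorphism type in (1) follows from homotopy type.

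I expect stabilization invariance in Step 4, together with ensuring a genuine differential in Step 2, to be the main obstacle. The negative stabilization in particular interacts with the $y$-variable.
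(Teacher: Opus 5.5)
There is a genuine gap, and it is in Step 2. Taking $U$ to be the polynomial variable $x$ and $V$ a single $y$-variable cannot produce the differential this theorem is about.

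In the paper, $\CH(K)=\widehat{\mathit{H}}(K)\otimes_{\F}\F[U,V]$ with $\partial=\sum U^iV^jd_{i|j}$. Here $d_{n|0}=d_n$ are Rasmussen's differentials and $d_{i|j}=\frac{1}{j!}\ad_F^j(d_{i+j})$; already for $T_{2,-3}$ one has $\partial a=Ub$. Every $d_{i|j}$ shifts $\gr_a$ by $-2$, i.e.\ it changes Hochschild degree. By contrast, the differential on $HH_*$ of the (possibly $y$-ified) Rouquier complex preserves Hochschild degree, and $x,y$ have $a$-degree $0$. So the minimal model you obtain in Step 3 has a differential of $a$-degree $0$ and can never contain terms like $Ud_1$.

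In fact that differential vanishes. For a knot all $y_i$ are identified. The $y$-ified crossing complex has its backward map proportional to the difference of the two strand $y$-variables, so the $y$-ification becomes $C(\beta)\otimes\F[y]$ with undeformed differential. The unreduced homology of a knot is free over $\F[x]$ \cite{rasmussen2016some}. Hence perturbation just returns $\widehat{\mathit{H}}(K)\otimes\F[U,V]$ with $\partial=0$.

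Rasmussen's $d_n$ come from deforming the potential, not from the $x$-action, and the terms with $j\geq 1$ require the $\mathfrak{sl}_2$ operator $F$. The symmetry of \cite{oblomkov2020soergel} is a statement about homology, so invoking it gives no chain-level $V$-direction. Step 1 is also inconsistent as written: reducing at a strand kills the (homotopically unique) $x$-action, so no $U$ survives. If you instead keep the unreduced complex, you must separately check that its reduction modulo $(U,V)$ computes $\widehat{\mathit{H}}(K)$.

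So even with every step made rigorous, your argument proves (1) and (2) only for the trivial complex. That complex satisfies them for free, since the graded vector space $\widehat{\mathit{H}}(K)$ is already an invariant. The paper's proof establishes something substantive about its specific $\partial$:
\begin{itemize}
\item $\partial^2=0$ is the combinatorial identity of Lemma \ref{lemma:induction equation} and Theorem \ref{thm:d2=0}, obtained from $d_md_n+d_nd_m=0$, $d_n^2=0$ and $\ad_F^{n+1}(d_n)=0$.
\item Invariance (Theorem \ref{thm:invariance}) holds because the $d_n$ and $F$ commute with Rasmussen's Reidemeister homotopy equivalences (Theorem \ref{ex-hypothesis}).
\item Item (2) is immediate because $\partial\equiv 0$ modulo $(U,V)$.
\end{itemize}
Your proposal supplies neither the identity nor the invariance of these operators, and Theorems \ref{thm:connected sums}--\ref{thm:invariants} depend on the nonzero $\partial$. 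A chain-level construction would be more conceptual, since it makes $\partial^2=0$ automatic. But it would have to deform the potential in the $U$-direction and incorporate $F$ in the $V$-direction, rather than use the $x$- and $y$-actions.
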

In the rest of the paper, we explore structural properties of the invariant $\CH(K)$. In analogy with knot Floer homology, we prove that the connected sum of knots corresponds to a tensor product of chain complexes.
\begin{theorem}\label{thm:connected sums}
    Let $K_1, K_2 \subset S^3$ be knots. Then
    $$\CH(K_1 \# K_2) \cong \CH(K_1) \otimes_{\F[U,V]} \CH(K_2)$$
    as $\Z\oplus\Z\oplus\Z$-graded chain complexes over $\F[U,V]$.
\end{theorem}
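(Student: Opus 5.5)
We have not yet seen how $\CH(K)$ is built, so we plan around the natural construction: close off a braid (or tangle) presentation of $K$ in a Soergel bimodule / matrix factorization model for Khovanov--Rozansky homology, and over $\F[U,V]$ deform the reduction. Concretely, we expect $\CH(K)$ to come from the Rouquier complex of a $1$-$1$ tangle $T_K$ (an open-up of $K$ at a marked strand). The Hochschild-type closure of the remaining strands is taken over $R=\F[x_1,\dots,x_n]$. The marked strand is closed with a two-variable deformation, in which $U$ and $V$ play the roles of the marked-point variable and its $y$-ification/odd partner. This produces a complex $C(T_K)$ of free modules over $\F[x_{\mathrm{mark}}]$ together with the extra structure. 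The plan is to prove Theorem~\ref{thm:connected sums} at the level of $1$-$1$ tangles and then pass through the closure.

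\textbf{Step 1 (stacking).} A diagram of $K_1\#K_2$ is the closure of the composite $1$-$1$ tangle $T_{K_1}\circ T_{K_2}$, obtained by stacking along the marked strand. By invariance (part (1) of Theorem~\ref{thm:full HOMFLY complex}), we may compute $\CH(K_1\#K_2)$ from this diagram. The Rouquier complex is monoidal, so the complex of the stacked tangle is the tensor product of the two tangle complexes over the polynomial ring of the shared marked endpoint. All other strands of $T_{K_1}$ and $T_{K_2}$ are disjoint, so their closures are taken independently.

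\textbf{Step 2 (closing off).} The partial closure of the unmarked strands is a Künneth-type statement. Hochschild homology of an external tensor product of bimodules over $R_1\otimes R_2$ is the tensor product of the Hochschild homologies. At the chain level, we use the Koszul model for the closure, and the Koszul complex of $R_1\otimes R_2$ is the tensor product of the Koszul complexes. We must check that the gradings $(q,a,t)$ add, and that the grading shifts normalizing $\CH$ (writhe, number of strands) are additive under connected sum. This holds since writhes add and the strand counts add with the shared strand counted once. That last fact is exactly the standard normalization of reduced homology.

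\textbf{Step 3 (the $U,V$ structure).} We show that the deformation producing the $\F[U,V]$-module structure is compatible with the identification in Step 2. This means the curvature or differential terms involving $U$ and $V$ on the composite tangle are sums of the corresponding terms for each piece. We expect this because they are built from the marked variable and from the $y$-ified differentials, which are local and additive. Combining the steps gives
$$\CH(K_1\#K_2)\cong \CH(K_1)\otimes_{\F[x_{\mathrm{mark}}]\text{-closure}}\CH(K_2)\cong \CH(K_1)\otimes_{\F[U,V]}\CH(K_2).$$
To upgrade the first chain homotopy equivalence to an isomorphism, we use that $\CH$ is taken in a reduced (minimal) model. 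A homotopy equivalence between minimal free complexes over $\F[U,V]$ is an isomorphism, and a tensor product of minimal complexes is minimal because its differential has no unit coefficients.

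\textbf{Main obstacle.} The hard part is Step 3: the tensor product must be over $\F[U,V]$ rather than over the marked-strand ring, with the deformation terms matching. In Floer theory, the analogous step requires a genus-zero degeneration argument. Here we would first try to show that, in the stacked tangle, the marked-strand variables on the two sides are homotopic. Dot-sliding through crossings gives a null-homotopy of $x_i-x_j$ along a strand. The remaining care is to ensure these homotopies are compatible with the $U,V$-deformation, which may require a filtered homological-perturbation argument.
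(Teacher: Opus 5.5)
There is a genuine gap, and it starts with the object itself. In this paper $\CH(K)$ is not a deformed closure of a Rouquier complex, and $U,V$ are not a marked-point variable and its $y$-ified partner. It is defined as $\CH(K)=\widehat{H}(K)\otimes_{\F}\F[U,V]$ with $\partial=\sum_{n\ge1}\sum_{i+j=n}U^iV^jd_{i|j}$. Here the $d_{i|j}=\frac{1}{j!}\ad_F^j(d_{i+j})$ are endomorphisms of reduced homology built from Rasmussen's differentials $d_n$ and the $\mathfrak{sl}_2$ operator $F$, and $U,V$ only record the bidegree $(i,j)$. Your Steps 1 and 2 (monoidality for stacked $1$-$1$ tangles, K\"unneth for the Hochschild closure) therefore never touch $\partial$. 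To use them you would first need a result identifying the minimal model of your deformed closure with $(\widehat H(K)\otimes_{\F}\F[U,V],\partial)$, and you do not provide one. Even inside your own framework, Step 3 carries all the content but is only stated as an expectation. Nothing in your argument turns a tensor product over the marked-strand ring into one over $\F[U,V]$ with matching deformation terms. So the second isomorphism in your final display is asserted rather than proved, and the remarks on dot-sliding and filtered perturbation do not supply it.

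The missing observation is that, because $\partial$ is defined on homology, the theorem is formal once the K\"unneth isomorphism on homology intertwines the operators. Let $\Phi\colon \widehat H(K_1\#K_2)\to \widehat H(K_1)\otimes_{\F}\widehat H(K_2)$ be the graded isomorphism of \cite[Lemma 7.8]{rasmussen2016some}. Both sides of the theorem are free over $\F[U,V]$ on these vector spaces. The tensor-product differential is $\sum U^iV^j\,(d_{i|j}\otimes 1+1\otimes d_{i|j})$, up to the usual Koszul sign. Comparing coefficients of $U^iV^j$ shows that $\Phi\otimes\mathrm{id}_{\F[U,V]}$ is a chain isomorphism if and only if $\Phi$ intertwines $d_{i|j}$ with $d_{i|j}\otimes 1+1\otimes d_{i|j}$ for every $(i,j)$. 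This is exactly what the paper extracts from Theorem~\ref{ex-hypothesis} with $k=j$ and $n=i+j$: the operator $\ad_F^j(d_{i+j})=j!\,d_{i|j}$ commutes with the isomorphism of \cite[Lemma 7.8]{rasmussen2016some}. This route gives an honest isomorphism directly, so no minimal-model argument or upgrade from a homotopy equivalence is needed.
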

Other algebraic properties of the full Khovanov-Rozansky homology are very similar to those of the full version of knot Floer homology as well. Considering that the knot Floer differential $\partial$ counts $J$-holomorphic disks in some symmetric product of surfaces and the Khovanov-Rozansky differential $\partial$ from Theorem \ref{thm:full HOMFLY complex} is constructed using a combinatorially defined spectral sequence, this is a surprising result that is probably indicative of some deeper connections. For us, it means that the structure theorem of knot Floer homology \cite{popovic2023algebraic} can be used to deduce an analogous result about Khovanov-Rozansky homology of $K$.

\begin{theorem}\label{thm:HOMFLY splitting}
Let $K \subset S^3$ be a knot and let $\CH_{\mathcal{R}_1}(K) = \CH(K) \otimes_{\F[U,V]} \mathcal{R}_1$ be its Khovanov-Rozansky complex over $\mathcal{R}_1 = \frac{\F[U,V]}{(UV)}$. Then
\[\CH_{\mathcal{R}_1}(K) \simeq S_1 \oplus \dots \oplus S_m \oplus L_1 \oplus \dots \oplus L_k\]
where $S_1, \dots, S_m$ are snake complexes and $L_1, \dots, L_k$ are local systems. Moreover, the direct summands are unique up to permutation.
\end{theorem}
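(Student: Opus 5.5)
The plan is to reduce the statement to the structure theorem of \cite{popovic2023algebraic}. That theorem classifies chain complexes over $\mathcal{R}_1=\F[U,V]/(UV)$ up to homotopy equivalence, under certain hypotheses, as direct sums of snake complexes and local systems, with the summands unique up to permutation. So the work consists of checking that $\CH_{\mathcal{R}_1}(K)$ satisfies those hypotheses. I would first list exactly what the structure theorem requires:
\begin{itemize}
\item a finitely generated free chain complex over $\mathcal{R}_1$;
\item a grading in which $U$ and $V$ act homogeneously with the appropriate degrees (in knot Floer homology, the Maslov and Alexander gradings with $U$ of bidegree $(-2,-1)$ and $V$ of bidegree $(0,1)$);
\item possibly a condition tying the differential to these gradings, for example that $\partial$ has homogeneous degree and that the complex is reduced or can be made reduced.
\end{itemize}

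The first step is to extract from the $\Z\oplus\Z\oplus\Z$-grading of $\CH(K)$ in Theorem~\ref{thm:full HOMFLY complex} a $\Z\oplus\Z$-grading (or a single homological grading together with a filtration-type grading) that plays the role of the Maslov and Alexander gradings. I would look for a pair of linear combinations of the three HOMFLY gradings such that $\partial$ has degree $-1$ in one of them and $0$ in the other, while $U$ and $V$ carry degrees matching the knot Floer conventions. Since $\partial$ is built from the spectral sequence lifting the differentials $d_{n|0}$ and $d_{i|j}$, each component $U^iV^j d_{i|j}$ must be homogeneous. This homogeneity should force the degrees of $U$ and $V$ and show that suitable gradings exist. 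If the structure theorem is stated over a single grading, this collapse is all that is needed.

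The second step is to reduce $\CH(K)$ over $\F[U,V]$. Using Gaussian elimination, the complex can be replaced by a homotopy equivalent one in which every coefficient of $\partial$ lies in the maximal ideal $(U,V)$. Tensoring with $\mathcal{R}_1$ preserves freeness, finite generation and homotopy equivalence, so the complex $\CH(K)\otimes\mathcal{R}_1$ satisfies the hypotheses of the structure theorem. The theorem then yields the splitting into snake complexes $S_i$ and local systems $L_j$, together with uniqueness up to permutation. By Theorem~\ref{thm:full HOMFLY complex}(1), the result is a knot invariant.

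I expect the main obstacle to be the grading compatibility, in particular whether the structure theorem needs a symmetry or a specific homology condition. If it requires $H_*(C\otimes_{\mathcal{R}_1}\F[U,V]/(U))$ to have a specified form, such as rank one after inverting $V$, I would have to prove the analogue for $\CH(K)$. That would mean showing that inverting $V$, and symmetrically $U$, in $\CH(K)$ recovers a one-dimensional homology, via Rasmussen's result that the $d_{n|0}$ spectral sequences converge to the $\mathfrak{sl}_n$ homology of the unknot. Uniqueness of the summands should then follow directly from the cited uniqueness statement, since the decomposition is intrinsic to the homotopy type over $\mathcal{R}_1$.
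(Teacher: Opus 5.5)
Your plan is the paper's proof. $\CH_{\mathcal{R}_1}(K)=\widehat{H}(K)\otimes_{\F}\mathcal{R}_1$ is a finitely generated free complex that is already reduced, since every term of $\partial$ carries some $U^iV^j$ with $i+j\geq 1$. Its first two gradings satisfy $\gr(U)=(-2,0)$, $\gr(V)=(0,-2)$ and $\gr(\partial)=(-1,-1)$. So simply forgetting $\gr_A$ puts it in the class handled by \cite[Theorem 4.2]{popovic2023link}, which gives both the splitting and its uniqueness; no Maslov/Alexander conversion or Gaussian elimination is needed.

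That theorem has no homology hypothesis, which is exactly why the conclusion allows several snake complexes of all three types. Your fallback of proving rank one after inverting $V$ is therefore unnecessary. This is fortunate, because the paper only conjectures that property. Rasmussen's spectral sequences would not directly give it either: each involves a single $d_n$ rather than the combined $\sum_j V^j d_{0|j}$, and it converges to the $\mathfrak{sl}_n$ homology of $K$, not of the unknot.
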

This structure allows us to  assign Khovanov-Rozansky counterparts to many concordance invariants arising in knot Floer homology. In particular, we define the Khovanov-Rozansky versions $\tau^H$ of Ozsv\'ath-Szab\'o's $\tau$ invariant \cite{ozsvath2003tau}, $\epsilon^H$ of Hom's $\epsilon$ invariant \cite{hom2014epsilon}, and $\phi^H_j$ of concordance homomorphisms $\phi_j$ \cite{dai2021more}.
\begin{theorem}\label{thm:invariants}
There is a family of knot invariants $\tau^H$, $\epsilon^H$, and $\phi_j^H$ for $j \in \N$. The invariants $\tau^H$ and $\phi^H_j$ for $j \in \N$ are additive under the connected sum of knots.
\end{theorem}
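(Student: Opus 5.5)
The plan is to derive all three families of invariants from the splitting of Theorem~\ref{thm:HOMFLY splitting}, copying how $\tau$, $\epsilon$ and $\phi_j$ are read off from the knot Floer complex over $\mathcal{R}_1$. By Theorem~\ref{thm:full HOMFLY complex}, the chain homotopy type of $\CH(K)$, and hence of $\CH_{\mathcal{R}_1}(K)$, is a knot invariant. Theorem~\ref{thm:HOMFLY splitting} then gives a decomposition into snake complexes and local systems whose summands are unique up to permutation. So any quantity defined from the multiset of summands is automatically a knot invariant; no further invariance check is needed.

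The first step is to isolate a distinguished summand. For knot Floer homology, $H_*(\CFK \otimes \F[U,V]/(V)) / \text{torsion} \cong \F[U]$, and this pins down a unique snake carrying the $U$-tower. I would prove the analogous statement here: after inverting $U$ (equivalently, setting $V=1$ or localizing), $\CH(K)$ has rank-one homology. Since $\CH(K)$ is built from the Rasmussen spectral sequence with the $d_{n|0}$ differentials, I would argue this by identifying the $V=0$, $U$-localized homology with the $E_\infty$ page of a positive differential $d_{n|0}$, which is known to be one-dimensional for knots (Rasmussen). By the uniqueness of the decomposition, exactly one summand $S_{\mathrm{dist}}$ contributes a free $\F[U,U^{-1}]$ part. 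Local systems contribute only torsion, so $S_{\mathrm{dist}}$ is a snake. This is the step I expect to be hardest: one must check that the grading conventions in Theorem~\ref{thm:full HOMFLY complex} match Rasmussen's $E_\infty$ computation. If the relevant page fails to be rank one, I would first try replacing the $U$-localization by a suitable specialization of one of the three gradings.

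Given the distinguished snake $S_{\mathrm{dist}}$, the invariants are defined as follows.
\begin{itemize}
\item $\tau^H$ is the appropriate grading of the generator of $S_{\mathrm{dist}}$ that survives $U$-localization.
\item $\epsilon^H \in \{-1,0,1\}$ records whether $S_{\mathrm{dist}}$ has length zero, or otherwise the direction of its first arrow ($U$ or $V$).
\item $\phi^H_j$ is the signed count of arrows of length $j$ in $S_{\mathrm{dist}}$, following \cite{dai2021more}.
\end{itemize}
Each of these is well defined because the snake is determined up to isomorphism.

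For additivity, I would use Theorem~\ref{thm:connected sums}: $\CH_{\mathcal{R}_1}(K_1\#K_2) \cong \CH_{\mathcal{R}_1}(K_1)\otimes_{\mathcal{R}_1}\CH_{\mathcal{R}_1}(K_2)$. Tensoring the two decompositions, the $U$-localized homology of the tensor product is the tensor product of the localized homologies. Hence the distinguished snake of $K_1\#K_2$ is the distinguished summand appearing in $S_{\mathrm{dist}}(K_1)\otimes S_{\mathrm{dist}}(K_2)$, after splitting that tensor product via Theorem~\ref{thm:HOMFLY splitting}. The gradings add under the tensor product, which gives additivity of $\tau^H$. For $\phi^H_j$, the computation is purely algebraic over $\mathcal{R}_1$ and independent of the topological origin of the complex. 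I would therefore invoke the corresponding algebraic statement from \cite{dai2021more} or \cite{popovic2023algebraic}: the $\phi_j$ are homomorphisms on the group of snake complexes up to local equivalence. The remaining check is that $\mathcal{R}_1$-local equivalence classes of distinguished snakes form a group under $\otimes$ in our triply graded setting, with the extra grading simply carried along.
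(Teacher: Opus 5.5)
The first part of your plan is right, and it is all the paper uses for invariance: any quantity read off from the multiset of summands in Theorem \ref{thm:HOMFLY splitting} is automatically a knot invariant. The gap is the step you flagged as hardest, isolating a \emph{single} distinguished snake, and it is not a technicality. Setting $V=0$ gives $\widehat{H}(K)\otimes\F[U]$ with differential $\sum_n U^n d_{n|0}$. In its $U$-localized homology a standard complex contributes rank $1$, a vertical snake contributes rank $2$, and horizontal snakes and local systems contribute $0$. So your rank-one claim would force $\CH_{\mathcal{R}_1}(K)$ to contain exactly one standard complex and no vertical snakes (and, by symmetry, no horizontal ones). That is precisely the conjecture $m=1$ stated after Theorem \ref{thm:HOMFLY splitting}. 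The paper explicitly leaves it open, because proving it requires understanding how the different $d_{n|0}$ interact.

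Rasmussen's results do not supply this. For $n\ge 2$, the spectral sequence $E(n)$ abuts to reduced $\mathfrak{sl}_n$ homology, which is not one-dimensional. For $n=1$, the higher differentials of $E(1)$ are induced by the chain-level filtered complex, not by the homology-level operators $d_{2|0},d_{3|0},\dots$. Nothing identifies the homology of $\sum_n U^n d_{n|0}$ on $\widehat{H}(K)\otimes\F[U^{\pm1}]$ with $E_\infty(1)\cong\F$, and your fallback of specializing a grading does not address this. Your additivity argument, which tracks $S_{\mathrm{dist}}(K_1)\otimes S_{\mathrm{dist}}(K_2)$, inherits the same gap. A smaller point: the $\phi_j$ of \cite{dai2021more} are homomorphisms on local equivalence classes of knot-like complexes (each locally equivalent to a standard complex), not on arbitrary snakes.

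The missing idea is to avoid choosing a summand at all. The paper lets $C_1,\dots,C_n$ be \emph{all} the standard-complex summands of $\CH_{\mathcal{R}_1}(K)$. These are determined up to $\approx$ by the uniqueness in Theorem \ref{thm:HOMFLY splitting}. It then defines $\tau^H,\epsilon^H,\phi^H_j$ as averages such as $\frac{1}{n}\sum_i\tau(C_i)$, which is why they are $\Q$-valued.

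Additivity then follows from the table in Lemma \ref{lemma:sim relations}. Up to $\sim$, a standard complex arises in a tensor product only from $C\otimes C$, and each such product contributes exactly one. Hence the standard summands of $\CH_{\mathcal{R}_1}(K_1\#K_2)\cong\CH_{\mathcal{R}_1}(K_1)\otimes_{\mathcal{R}_1}\CH_{\mathcal{R}_1}(K_2)$ are the $nm$ complexes coming from the products $C_i\otimes C'_k$. By \cite[Theorem 7.2]{dai2021more}, each of these has $\tau=\tau(C_i)+\tau(C'_k)$ and $\phi_j=\phi_j(C_i)+\phi_j(C'_k)$. Averaging over the $nm$ pairs gives $\tau^H(K_1\#K_2)=\tau^H(K_1)+\tau^H(K_2)$, and likewise for $\phi^H_j$. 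If the conjecture $m=1$ were proved, your definitions would essentially reduce to these.
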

We conjecture that $m=1$ in the statement of Theorem \ref{thm:HOMFLY splitting}, as is the case in knot Floer homology. On the other hand, there are plenty of knots for which the splitting of $\CH(K)$ from Theorem \ref{thm:HOMFLY splitting} is not the same as the corresponding splitting of $\CFK(K)$ in knot Floer homology. It follows that the invariants $\tau^H$, $\epsilon^H$ and $\phi_j^H$ are not the same as $\tau$, $\epsilon$ and $\phi_j$ in general. We give an example in which they differ in Section \ref{sec:example}.
\begin{conjecture}\label{conj:concordance homomorphisms}
Knot invariants $\tau^H$, $\epsilon^H$, and $\phi_j^H$ for $j \in \N$ are concordance invariants. 
\end{conjecture}
Conjecture \ref{conj:concordance homomorphisms} would imply that $\tau^H$ and $\phi_j^H$ are concordance homomorphisms. This is presently probably out of reach since none of the flavors of Khovanov-Rozansky homology are known to be functorial under link cobordisms, which is in turn complicated by the fact that there is no definition of Khovanov-Rozansky homology for arbitrary link diagrams.

\section{Algebraic properties}
We begin with a review of the most important algebraic properties of Khovanov-Rozansky homology. Subsection \ref{subsection:algebraic structure} describes the properties of interest as they presently appear in the literature. After a short detour to introduce a new set of grading conventions in Subsection \ref{subsection:grading conventions}, we restate the same properties in the way that will be used in this paper in Subsection \ref{subsection:our algebraic structure}. Throughout, let $\F$ be an arbitrary field with $\mathrm{char}(\F) = 0$ and $K \subset S^3$ a knot.

\subsection{Algebraic properties in the literature}\label{subsection:algebraic structure}
Let $\widehat{\mathit{H}}(K)$ denote the reduced Khovanov-Rozansky homology of $K$.
\begin{enumerate}
    \item \textbf{Gradings}: $\widehat{\mathit{H}}(K)$ is a finite dimensional $\Z\oplus\Z\oplus\Z$-graded vector space over $\F$ with grading $\gr_{\mathit{DGR}} = (\gr_a, \gr_q, \gr_t)$.
    \item \textbf{Symmetry}: There is an automorphism $i:\widehat{\mathit{H}}(K)\to\widehat{\mathit{H}}(K)$ which sends $(\gr_a, \gr_q, \gr_t)$ to $(\gr_a, -\gr_q, \gr_t-\gr_q)$.
    \item \textbf{Higher differentials}: For each $n \in \Z \setminus \{0\}$, there exists a vector space endomorphism $d_n: \widehat{\mathit{H}}(K)\to\widehat{\mathit{H}}(K)$ satisfying $d_n^2=0$. Its grading is
    $$
    \gr_{\mathit{DGR}}(d_n) = \begin{cases} (-2, 2n, -1) & | \ n>0 \\  (-2, 2n, -1+2n) & | \ n < 0. \end{cases}
    $$
    Moreover, $d_nd_m + d_md_n=0$ for all $m, n \in \N$.
    \item \textbf{$\mathfrak{sl}_2$ triple and higher operators}: There exists an $\mathfrak{sl}_2$ triple $(E, F, H)$ acting on $\widehat{H}(K)$ with gradings $\gr_{\mathit{DGR}}(E) = (0, -4, -2)$, $\gr_{\mathit{DGR}}(F) = (0, 4, 2)$, and $\gr_{\mathit{DGR}}(H) = (0, 0, 0)$. For each $(i, j) \in \N_0^2 \setminus \{(0, 0)\}$, the endomorphism $F$ gives rise to a higher operator $d_{i|j}: \widehat{\mathit{H}}(K)\to\widehat{\mathit{H}}(K)$ defined via $d_{i|j} = \frac{1}{j!}\ad_F^j(d_{i+j})$.
\end{enumerate}
This list is a result of deep work by many different authors. The gradings used above are the gradings that were predicted by \cite{dunfield2006superpolynomial} before the theory satisfying (1) was constructed in \cite{khovanov2004matrix}. The symmetry from point (2) was proven much later in \cite{oblomkov2020soergel} after a long series of papers that related it to algebraic geometry, representation theory and combinatorics. Higher differentials for $n>0$ were constructed in \cite[Section 5]{rasmussen2016some} and shown to have the grading as in point (3). The higher differentials for $n<0$ follow automatically from symmetry. The action of the $\mathfrak{sl}_2$ triple $(E, F, H)$ from point (4) was constructed in \cite{gorsky2024tautological} to provide an alternative proof of symmetry and based on that, the higher operators $d_{i|j}$ were considered in \cite{chandler2024structures}. We will use their results as stated in the following theorem.
\begin{theorem}
    \cite{gorsky2024tautological,chandler2024structures,sano2026ification}
    \label{ex-hypothesis}
    There exists an $\mathfrak{sl}_2$ triple $(E, F, H)$ acting on reduced Khovanov-Rozansky homology $\widehat{H}(K)$. In terms of grading $\gr_{\mathit{DGR}} = (\gr_a, \gr_q, \gr_t)$ the maps have degrees $\gr_{\mathit{DGR}}(E) = (0, -4, -2)$, $\gr_{\mathit{DGR}}(F) = (0, 4, 2)$ and $\gr_{\mathit{DGR}}(H) = (0, 0, 0)$. Moreover, the endomorphisms $E$, $F$, and $H : \widehat{H}(K) \to \widehat{H}(K)$ commute with the same chain homotopy equivalences that establish the invariance of Khovanov-Rozansky homology $\widehat{H}(K)$ under Reidemeister moves from \cite[Section 5.4]{rasmussen2016some}. Finally, there is a way to define the map $F$ on $\widehat{H}(K_1)\otimes\widehat{H}(K_2)$ such that for all $n, k \in \N$, the $k$-fold iterated commutator $\ad_F^k(d_n)$, and thus $d_{i|j} = \frac{1}{j!}\ad_F^k(d_n)$ commute with the chain homotopy equivalences that establish the connect sum formula in Khovanov-Rozansky homology \cite[Lemma 7.8]{rasmussen2016some}.
\end{theorem}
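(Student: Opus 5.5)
Theorem~\ref{ex-hypothesis} is attributed to \cite{gorsky2024tautological,chandler2024structures,sano2026ification}. The plan is therefore to assemble it from those sources and verify compatibility, not to build anything new. There are three parts: existence and gradings of the $\mathfrak{sl}_2$ triple, compatibility with the Reidemeister invariance maps, and compatibility with the connected sum maps.

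\textbf{Existence and gradings.} I would take the construction of \cite{gorsky2024tautological}. There $F$ is multiplication by a tautological class, realized on the Soergel bimodule (Rouquier) complex of a braid closure as the action of a degree-$(0,4,2)$ element built from the $y$-ified differentials. $E$ comes from the dual construction, and $H$ is the grading operator $[E,F]$, which acts by a scalar on each tridegree and so has degree $(0,0,0)$. The $\mathfrak{sl}_2$ relations and the grading conventions have to be translated into $\gr_{\mathit{DGR}}$. Here I would use the dictionary of \cite{rasmussen2016some} between Khovanov--Rozansky's $(a,q,t)$ and the DGR gradings, and check that $F$ is a raising operator of degree $(0,4,2)$. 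This is consistent with the symmetry $(\gr_a,\gr_q,\gr_t)\mapsto(\gr_a,-\gr_q,\gr_t-\gr_q)$ exchanging $E$ and $F$. The passage to reduced homology needs a separate check: one marks a strand and shows that the tautological action commutes with the action of the marked variable $x_1$. This holds because $F$ is built from polynomial endomorphisms.

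\textbf{Reidemeister invariance.} The invariance equivalences of \cite[Section 5.4]{rasmussen2016some} are composed of Markov moves and braid relations. Each is realized by explicit maps of Rouquier complexes that are linear over the relevant polynomial rings and, after $y$-ification, over $y$-variables. Since $F$ and $E$ are defined via the $y$-ified curved complexes, compatibility follows once each elementary equivalence lifts to the $y$-ified setting, up to homotopy and naturally. The \cite{chandler2024structures} framework supplies these lifts: there $d_n$ and $\ad_F^k(d_n)$ are computed from one $y$-ified model. The Markov II move is where I expect trouble, since it changes the number of strands and the $y$-variables have to be matched. I would handle it by an explicit Gaussian elimination argument, checking that the homotopies involved are $y$-linear.

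\textbf{Connected sums; main obstacle.} I would define $F$ on $\widehat H(K_1)\otimes\widehat H(K_2)$ by the Leibniz rule $F\otimes 1+1\otimes F$, and define $d_n$ on the tensor product analogously. Then $\ad_F^k$ of a derivation-type operator is again of derivation type, and the formula $d_{i|j}=\frac1{j!}\ad_F^j(d_{i+j})$ is preserved. The real content is to show that under the map of \cite[Lemma 7.8]{rasmussen2016some}, the tautological class of $K_1\#K_2$ restricts to the sum of the classes of the factors. This is what I expect to be the main obstacle. My approach would be to write the connected sum braid as a juxtaposition joined along the marked strand and use additivity of the tautological classes in the $y$-variables over the glued strand, the point established in \cite{sano2026ification}. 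Once $F$ is shown to be a derivation there, the claim for $\ad_F^k(d_n)$ follows formally by induction on $k$.
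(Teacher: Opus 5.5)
Your plan of importing the statement from \cite{gorsky2024tautological,chandler2024structures,sano2026ification} rather than proving anything new matches what the paper does; the paper offers no argument of its own. The gap is in your first step, where you propose to take the construction of $F$ from \cite{gorsky2024tautological} as published. The Remark immediately after the theorem explains that this is exactly where the literature breaks. In the proof of Theorem~6.4(b) of \cite{gorsky2024tautological}, the formula defining $F_2$ (their Equation~(33)) gives a well-defined endomorphism of $\mathcal{Y}(\beta)$. However, it does not preserve the relevant ideal $I$, so it does not descend to an endomorphism of $\mathcal{Y}(\beta)/I$. Everything else you intend to cite is built on $F=F_2$: the $\mathfrak{sl}_2$ relations, the degree $(0,4,2)$, compatibility with the invariance maps, and the operators $\ad_F^k(d_n)$ of \cite{chandler2024structures}. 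A proof that cites the original construction therefore inherits the gap at its root. Your justification for passing to the reduced theory (that $F$ commutes with $x_1$ because it is built from polynomial endomorphisms) relies on the same kind of inference that fails there. Being an endomorphism of the larger object does not imply preserving the ideal one quotients by, and that has to be checked directly.

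The repair the paper indicates is to replace $F_2$ by Sano's map $\mathsf{e}$ from \cite[Proposition~2.17]{sano2026ification}, which is well defined and closely related, and then to re-verify each quoted property for the corrected operator. The paper says it has not checked all of them and that no complete account with the corrected $F_2$ exists in the literature. You cite \cite{sano2026ification} only for an additivity statement about tautological classes along the glued strand of a connected sum. That is not the role the paper assigns to it, so as written your proposal neither detects nor repairs the problem. There is also a second missing link. The equivalences of \cite[Section~5.4]{rasmussen2016some} are maps in Rasmussen's double-complex reformulation of the matrix-factorization construction, not maps of Rouquier complexes. If you work in the $y$-ified Soergel setting, you would also need a comparison of models that intertwines both $d_n$ and $F$ with those specific equivalences, since the statement asserts commutation with exactly those maps.
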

\begin{remark}
    The construction and properties of the $\mathfrak{sl}_2$ triple $(E, F, H)$ and the operators $d_{i|j}$ considered in \cite{gorsky2024tautological,chandler2024structures} are based on the map $F_2$. However, there is an issue with the proof of Theorem 6.4 (b) in \cite{gorsky2024tautological}, which constructs it; the formula defining $F_2$ in Equation (33) gives a well-defined endomorphism of $\mathcal{Y}(\beta)$, but it does not preserve the relevant ideal $I$ and hence does not descend to an endomorphism of $\mathcal{Y}(\beta)/I$. After discussing this point with the authors of \cite{gorsky2024tautological}, it was suggested that the gap in the proof could be rectified by using the map $\mathsf{e}$ defined by Sano \cite[Proposition 2.17]{sano2026ification} in place of $F_2$. Sano's map is well-defined and closely related, so the proofs of all of the results quoted in Theorem \ref{ex-hypothesis} that were originally proven in \cite{gorsky2024tautological,chandler2024structures} presumably remain valid when the correct $F_2$ is used. Nonetheless, I have not checked all of them myself and as far as I am aware, a complete presentation of the topic with the correct $F_2$ does not appear anywhere in the literature.
\end{remark}

\subsection{Grading conventions}\label{subsection:grading conventions}
Whenever knot homology theories are studied, there is always a lot of grading information. Different authors adopt different conventions and the sheer amount of seemingly random indices one is not intimately familiar with can sometimes obfuscate the mathematical ideas.

Somewhat ironically, this dissertation introduces a yet another set of gradings $\gr = (\gr_U, \gr_V, \gr_A)$. They are manifestly symmetric and we hope that $(\gr_a, \gr_q, \gr_t)$ and other grading conventions will gradually phase out in favor of $(\gr_U, \gr_V, \gr_A)$. A similar phenomenon has occurred in knot Floer homology where the literature has standardized with Maslov and Alexander gradings replaced by $(\gr_U, \gr_V)$. For transferring between $\gr$ and the grading $(\gr_a, \gr_q, \gr_t)$ from \cite{dunfield2006superpolynomial}, one can use the formulae
\begin{align*}
\gr_U &= \gr_a-\gr_t + \gr_q&    \gr_a &=\gr_A\\
\gr_V &= \gr_a - \gr_t \quad \quad \quad \quad \text{ or equivalently } &\gr_q &=\gr_U-\gr_V\\ 
\gr_A &= \gr_a&   \gr_t&= \gr_A-\gr_V.
\end{align*}
Another restatement of the grading relationship comes from the Poincar\'e polynomials of the Khovanov-Rozansky homology with respect to different gradings. If the Poincar\'e polynomial is given in terms of the variables $(a, q, t)$, one can substitute $a=UVA$, $q=U$, $t=U^{-1}V^{-1}$ to obtain the Poincar\'e polynomial in terms of the variables $(U,V,A)$. Conversely, the substitution $A=at, U=q, V=t^{-1}q^{-1}$ can be used to transform the Poincar\'e polynomial in terms of $(U,V,A)$ into one in terms of $(a,q,t)$.

\subsection{Algebraic properties with the new grading convention}\label{subsection:our algebraic structure}
We now present a condensed restatement of the structural properties of Khovanov-Rozansky homology from Subsection \ref{subsection:algebraic structure} in terms of the new gradings.
\begin{enumerate}
    \item \textbf{Gradings}: $\widehat{\mathit{H}}(K)$ is a finitely dimensional $\Z\oplus\Z\oplus\Z$-graded vector space over $\F$ with grading $\gr = (\gr_U, \gr_V, \gr_A)$.
    \item \textbf{Symmetry}: There is an automorphism $i:\widehat{\mathit{H}}(K)\to\widehat{\mathit{H}}(K)$ which sends $(\gr_U, \gr_V, \gr_A)$ to $(\gr_V, \gr_U, \gr_A)$.
    \item \textbf{Higher differentials and operators}: For each $(i,j) \in \N_0^2 \setminus\{(0,0)\}$, there exists an endomorphism $d_{i|j}: \widehat{\mathit{H}}(K)\to\widehat{\mathit{H}}(K)$. Its grading is
    $$
    \gr(d_{i|j}) = (2i-1, 2j-1, -2).
    $$
    Moreover, $d_{n|0}d_{m|0}+d_{m|0}d_{n|0}=0$ for all $m,n \in \N$.

    Additionally, there is an operator $F: \widehat{H}(K) \to \widehat{H}(K)$ satisfying Theorem \ref{ex-hypothesis} such that for all $(i, j) \in \N_0^2 \setminus \{(0,0)\}$ we have $d_{i|j} = \frac{1}{j}[F, d_{i+1|j-1}]$. Throughout the rest of the paper, we sometimes use $d_n$ to mean $d_{n|0}$.
\end{enumerate}

\section{The full version}\label{sec:definition}
Let the ring $\F[U,V]$ be equipped with a $\Z \oplus \Z \oplus \Z$ grading $\gr=(\gr_U, \gr_V, \gr_A)$ satisfying $\gr(U) = (-2, 0, 0)$ and $\gr(V)=(0, -2, 0)$. Let $\CH(K) = \widehat{\mathit{H}}(K) \otimes_{\F} \F[U,V]$ be the finitely generated free $\Z\oplus\Z\oplus\Z$-graded module over $\F[U,V]$. We can define a $\F[U,V]$-module endomorphism $\partial: \CH(K)\to\CH(K)$ via
$$\partial = \sum_{n=1}^\infty \ \sum_{i+j=n} U^iV^j d_{i|j}.$$
Recalling that $\gr(d_{i|j}) = (2i-1, 2j-1, -2)$, it follows that $\partial$ is homogeneous of degree $\gr(\partial) = (-1, -1, -2)$.

\subsection{Pictorial representation}\label{subsection:pictorial representation}
There exist various visual representations of the triple grading on $\widehat{\mathit{H}}(K)$, most notably the `flying saucer' diagrams from \cite{dunfield2006superpolynomial} and the diagrams sliced by the $\Delta$ grading from \cite{chandler2024structures}. Having introduced a new set of gradings, we also define a new pictorial representation of Khovanov-Rozansky homology that is best suited for these gradings. The idea is exactly the same as in the pictorial representation of knot Floer homology that was used in \cite{popovic2023link}.

The new pictures will mostly be used to depict the full version of the Khovanov-Rozansky chain complex $\CH(K)$, although the homology $\widehat{\mathit{H}}(K)$ can be recovered from the pictures as well by forgetting some of their features. The generators of $\CH(K)$ over $\F$ are drawn on a $2$-dimensional lattice where the $x$-coordinate denotes the $U^{-1}$ power and the $y$-coordinate denotes the $V^{-1}$ power. The arrows connecting the generators are decorated with the elements of $\F$ and indicate the differential $\partial$. For example, the horizontal arrow between $a$ and $b$ in Figure \ref{fig:finite CH} indicates that $\partial a = Ub$ and a vertical arrow between $c$ and $b$ in Figure \ref{fig:finite CH} indicates that $\partial c = Vb$. More generally, an arrow from $x$ to $y$ that moves by $i$ units in the horizontal direction, by $j$ units in the vertical direction and is decorated with $\lambda \in \F$ indicates that $\partial x$ contains the term $\lambda U^iV^j y$. An absence of a decoration means the same as a decoration with $1\in\F$.

Since $\gr(\partial)=(-1, -1, -2)$, the gradings $\gr_U-\gr_V$ and $\gr_U-\frac{1}{2}\gr_A$ are preserved by $\partial$ and one can calculate their values on $x$ and $U^iV^jx$ for some $x\in \CH(K)$ and $i, j \in \Z$. This gives that $(\gr_U-\gr_V)(U^iV^jx)=-2i+2j + (\gr_U-\gr_V)(x)$ and $(\gr_U-\frac{1}{2}\gr_A)(U^iV^jx) = -2i + (\gr_U-\frac{1}{2}\gr_A)(x)$. For any $(i, j) \neq (0,0)$, at least one of the gradings differ and therefore $x$ and $U^iV^jx$ are not connected via $\partial$. Formally speaking, the full Khovanov-Rozansky complex $\CH(K)$ splits as a chain complex over $\F$ (but \emph{not} as a chain complex over $\F[U,V]$) as a direct sum of $\Z^2$ many summands, one for each pair of values taken by the gradings ($\gr_U-\gr_V, \gr_U-\frac{1}{2}\gr_A$). Since the pictures corresponding to different summands are all translates of each other, we always only draw one of them and are intentionally ambiguous about the origin of the plane. 

The full Khovanov-Rozansky complexes $\CH(T_{2,-3})$ and $\CH(4_1)$ of the negative trefoil and the figure-eight knot are drawn in Figure \ref{fig:finite CH}. The complex $\CH(T_{2,-3})$ is freely generated by three generators $a$, $b$, and $c$. The differential satisfies $\partial a = Ub$, $\partial c = Vb$, and $\partial b = 0$. In general, the pictures will also feature some diagonal arrows corresponding to nonzero operators $d_{i|j}$ with $i,j > 0$.
\input{images/finite_CH.tex}

In the next two sections, we prove that $\partial^2=0$ and that the isomorphism class of $(\CH(K), \partial)$ is an invariant of $K$.

\section{\texorpdfstring{$\partial^2=0$}{d2=0}}\label{sec:d2=0}
This section contains the proof of the claim that $\partial$ is a differential, \emph{i.e.} that $\partial^2=0$. We do not find the proof to be particularly insightful -- rather, it consists of a long series of very technical algebraic manipulations. To shed some insight on the strategy of the underlying proof, we begin with an example. Motivated by some of its features, we state and prove Lemma \ref{lemma:d_{i|j}} and Lemma \ref{lemma:induction equation}. The section is concluded with the full proof that $\partial$ is a differential.

Note that 
$$\partial^2 = \sum_{n=1}^\infty \ \sum_{n_i+n_j=n} \ U^{n_i}V^{n_j}\sum_{\substack{i_1+i_2=n_i \\ j_1+j_2=n_j}}  \ d_{i_2|j_2}  d_{i_1|j_1}$$
where $n_i, n_j \in \N$ and $i_1, i_2, j_1, j_2 \in \N_0$ are such that none of the operators are $d_{0|0}$. To show that $\partial^2=0$, we will show in the proof of Theorem \ref{thm:d2=0} that each of the inner sums equals $0$, \emph{i.e.} that
$$\sum_{\substack{i_1+i_2=n_i \\ j_1+j_2=n_j}}  \ d_{i_2|j_2}  d_{i_1|j_1}=0$$
for all $n_i, n_j \in \N$.
\begin{example}\label{example:22}
Consider the case $n_i = n_j =2$, \emph{i.e.} the equality $$d_{1|2}d_{1|0} + d_{2|1}d_{0|1} +d_{0|2}d_{2|0} +d_{1|1}d_{1|1} +d_{2|0}d_{0|2} +d_{0|1}d_{2|1} +d_{1|0}d_{1|2} = 0$$
The geometric interpretation of the sum can be found in Figure \ref{fig:22}. It depicts two generators $x, y \in \widehat{H}(K)$ such that $y$ is $n_j=2$ units below and $n_i=2$ units to the left of $x$. The seven summands correspond to seven possible stopping points when one travels from $x$ to $y$ in two steps. For example, the stopping point $a$ corresponds to the summand $d_{1|2}d_{1|0}$.
\input{images/22.tex}

Let us partition the sum further based on the value of $m_1 = i_1+j_1$. For example, the case $m_1 = 2$ corresponds to the terms $d_{0|2}d_{2|0} +d_{1|1}d_{1|1} +d_{2|0}d_{0|2}$ on the long diagonal in Figure \ref{fig:22}. Reminded that we use the notation $d_2 = d_{2|0}$, we now calculate
\begin{align*}
d_{0|2}d_{2|0}& +d_{1|1}d_{1|1} +d_{2|0}d_{0|2}\\
&=\frac{1}{2}[F, d_{1|1}]d_{2} + [F, d_{2}][F, d_{2}] + d_{2} \frac{1}{2}[F, d_{1|1}]\\
&=\frac{1}{2}[F, [F, d_{2}]] d_{2} + [F, d_{2}][F, d_{2}] + d_{2}\frac{1}{2}[F, [F, d_{2}]]\\
&=\frac{1}{2}d_2F^2d_2-d_2Fd_2F+\frac{1}{2}d_2^2F^2 + Fd_2Fd_2-Fd_2^2F-d_2F^2d_2+d_2Fd_2F+\\
&\qquad +\frac{1}{2}F^2d_2^2-Fd_2Fd_2+\frac{1}{2}d_2F^2d_2\\
&=\frac{1}{2}d_2^2F^2 - Fd_2^2F + \frac{1}{2}F^2d_2^2\\
&=0
\end{align*}
because $d_2$ is a differential and satisfies $d_2^2=0$. What remains is to show that the sum of the remaining four terms with $m_1=1$ and $m_1=3$ also evaluates to $0$. We can similarly calculate
\begin{align*}
d_{1|0}d_{1|2}& +d_{0|1}d_{2|1} +d_{2|1}d_{0|1}+ d_{1|2}d_{1|0}\\
&= d_1 \frac{1}{2}[F,[F, d_3]] + [F, d_1][F, d_3] + [F, d_3][F, d_1] + \frac{1}{2}[F,[F, d_3]]d_1\\
&=\frac{1}{2} d_1F^2d_3 - d_1Fd_3F + \frac{1}{2}d_1d_3F^2+Fd_3Fd_1-Fd_3d_1F-d_3F^2d_1+\\
&\qquad +d_3Fd_1F+ Fd_1Fd_3 - Fd_1d_3F - d_1F^2d_3 + d_1Fd_3F + \frac{1}{2}F^2d_3d_1-\\
&\qquad -Fd_3Fd_1+\frac{1}{2}d_3F^2d_1\\
&=-\frac{1}{2}d_1F^2d_3 + \frac{1}{2}d_1d_3F^2 - F(d_3d_1+d_1d_3)F -\frac{1}{2}d_3F^2d_1 + d_3Fd_1F +\\
&\qquad +Fd_1Fd_3 + \frac{1}{2}F^2d_3d_1.
\end{align*}
Note that $d_1d_3+d_3d_1=0$ because $d_1$ and $d_3$ anti-commute. We are left with six terms. We now use that $[F,[F, d_1]]=0$, \emph{i.e.} that $F^2d_1-2Fd_1F+d_1F^2=0$. Precomposing or post-composing with $\frac{1}{2}d_3$ gives us two equations. Adding them together gives the result. Therefore, $\sum_{\substack{i_1+i_2=2 \\ j_1+j_2=2}}  \ d_{i_2|j_2}  d_{i_1|j_1}=0$ as required. 
\end{example}

Let us now treat the general case. In Example \ref{example:22}, our strategy was to replace the operators $d_{i|j}$ with compositions of $F$ and $d_{i+j}$ and identify cancellations. One can only do this manually for low values of $j$. In general, we have the following formula.
\begin{lemma}\label{lemma:d_{i|j}}
Let $i, j \in \N_0$ be such that they are not both $0$. Then
$$d_{i|j} = \frac{1}{j!}\sum_{k=0}^{j} (-1)^k \binom{j}{k} F^{j-k}d_{i+j}F^k.$$
\end{lemma}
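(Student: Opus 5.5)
We prove the formula by induction on $j$, with $i+j=n$ fixed throughout, so that every operator in play is built from the single higher differential $d_n = d_{n|0}$. The tool is the recursion from Subsection~\ref{subsection:our algebraic structure}, namely $d_{i|j} = \frac{1}{j}[F, d_{i+1|j-1}]$. Iterating it shows that $d_{i|j}$ agrees with the definition $d_{i|j} = \frac{1}{j!}\ad_F^j(d_{i+j})$ from Subsection~\ref{subsection:algebraic structure}. The lemma is then just the expansion of the iterated commutator $\ad_F^j(d_n)$, which is a binomial-type identity.

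For the base case $j=0$, the right-hand side is the single term $F^0 d_{i} F^0 = d_{i} = d_{i|0}$, so there is nothing to prove.

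For the inductive step, assume the formula for $d_{i+1|j-1}$ with $j \geq 1$, so that
\[
d_{i+1|j-1} = \frac{1}{(j-1)!}\sum_{k=0}^{j-1}(-1)^k\binom{j-1}{k}F^{j-1-k}d_nF^k .
\]
Applying the recursion gives
\[
d_{i|j} = \frac{1}{j}\bigl(F d_{i+1|j-1} - d_{i+1|j-1}F\bigr).
\]
The term $F d_{i+1|j-1}$ contributes $(-1)^k\binom{j-1}{k}F^{j-k}d_nF^k$ for $0\le k\le j-1$. The term $-d_{i+1|j-1}F$, after the reindexing $k\mapsto k-1$, contributes $(-1)^k\binom{j-1}{k-1}F^{j-k}d_nF^k$ for $1\le k\le j$. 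Collecting the coefficient of $F^{j-k}d_nF^k$ and using Pascal's rule $\binom{j-1}{k}+\binom{j-1}{k-1}=\binom{j}{k}$ (with the conventions $\binom{j-1}{j}=\binom{j-1}{-1}=0$ covering the endpoints $k=0$ and $k=j$), we obtain
\[
\frac{1}{j}\cdot\frac{1}{(j-1)!}\sum_{k=0}^{j}(-1)^k\binom{j}{k}F^{j-k}d_nF^k ,
\]
which is exactly the claimed formula, since $\frac{1}{j}\cdot\frac{1}{(j-1)!}=\frac{1}{j!}$.

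No step here is a real obstacle; the only care needed is bookkeeping. First, the induction must hold $n=i+j$ fixed, so the recursion always passes from $(i,j)$ to $(i+1,j-1)$ and the same $d_n$ appears at every stage. Second, the boundary terms $k=0$ and $k=j$ must be handled correctly in the reindexing. Because $\mathrm{char}(\F)=0$, the factors $\frac{1}{j}$ and $\frac{1}{j!}$ make sense.
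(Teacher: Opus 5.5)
Your proof is correct and is essentially the paper's argument. The paper expands $\ad_F^j(d_{i+j})$ directly and counts that the $2^j$ terms of the form $F^{j-k}d_{i+j}F^k$ contribute a signed total of $(-1)^k\binom{j}{k}$; you make that same expansion rigorous by induction on $j$ (with $n=i+j$ fixed), using the recursion $d_{i|j}=\frac{1}{j}[F,d_{i+1|j-1}]$ and Pascal's rule, including the boundary terms that the paper's ``simple counting argument'' leaves implicit.
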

\begin{proof}
By definition $d_{i|j} = \frac{1}{j!}\ad_F^j(d_{i+j})$ where $\ad_F^j(d_{i+j}) = [F, \dots, [F, d_{i+j}] \dots ]$ is the $j$-fold iterated adjoint operator. Expanding the commutator expression in full yields
$$\binom{j}{0}\underbrace{\addstackgap[10pt] F\cdots F}_{j}d_{i+j} - \binom{j}{1}\underbrace{\addstackgap[10pt ]F \cdots F}_{j-1}d_{i+j}F + \dots + (-1)^{j}\binom{j}{j} d_{i+j}F\cdots F$$
by a simple counting argument. Concretely, each of the $2^j$ terms in the expanded expression is of the form $F \cdots F d_{i+j} F \cdots F$ with different number of $F$s on the left and right. The signed number of terms which are precomposed by $F^k$ is $(-1)^k\binom{j}{k}$. The expression can be condensed into the formula $d_{i|j} = \frac{1}{j!}\sum_{k=0}^{j} (-1)^k \binom{j}{k} F^{j-k}d_{i+j}F^k$ as required.
\end{proof}
Another feature of Example \ref{example:22} was that the case of `shorter' diagonals was much more difficult than the case of `longer' diagonals. This was not a coincidence and is the case in general too. Recall that the last step in our calculation was using the formula $-d_1F^2 = F^2d_1-2Fd_1F$. This is a special case of the following equality with $m_1=1$ and $k=1$.
\begin{lemma}\label{lemma:induction equation}
Let $m_1, k \in \N$. Then
$$(-1)^{m_1}d_{m_1}F^{m_1+k} = \frac{(m_1+k)!}{(k-1)!}\ \sum_{c=0}^{m_1}\frac{(-1)^c}{(m_1-c)! \ c!} \cdot \frac{1}{m_1+k-c}F^{m_1+k-c}d_{m_1}F^c.$$
\end{lemma}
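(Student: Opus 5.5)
The plan is to reduce the statement to an identity about polynomials in two commuting variables and then to a binomial sum. The single operator-theoretic input is the vanishing
$$\ad_F^{m_1+1}(d_{m_1}) = \sum_{a=0}^{m_1+1}(-1)^a\binom{m_1+1}{a}F^{m_1+1-a}d_{m_1}F^a = 0,$$
which is the general form of the identity $[F,[F,d_1]]=0$ used at the end of Example \ref{example:22}. I would take it from the $\mathfrak{sl}_2$ structure of Theorem \ref{ex-hypothesis}: $d_{m_1}=d_{m_1|0}$ is an extremal vector for $\ad_F$, so the string $d_{m_1|0},d_{m_1-1|1},\dots,d_{0|m_1}$ stops there. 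Equivalently, a putative $d_{-1|m_1+1}$ would have the impossible grading $(-3,\dots)$. This vanishing is the step I expect to need the most care to justify from the cited results. Everything after it is formal; the expansion of $\ad_F^{m_1+1}$ is exactly Lemma \ref{lemma:d_{i|j}}.

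Write $N=m_1+k$ and $d=d_{m_1}$. Consider the linear map $\Phi$ from homogeneous degree-$N$ polynomials in commuting variables $x,y$ to operators, given by $\Phi(x^ay^b)=F^a d F^b$. Under $\Phi$, the product $x^r y^s (x-y)^{m_1+1}$ with $r+s=k-1$ is sent to $F^r\,\ad_F^{m_1+1}(d)\,F^s=0$. Hence $\Phi$ kills the degree-$N$ part of the ideal generated by $(x-y)^{m_1+1}$. It therefore suffices to find coefficients $a_c$ with
$$y^N-\sum_{c=0}^{m_1}a_c\,x^{N-c}y^c\in (x-y)^{m_1+1}.$$
Applying $\Phi$ to this then gives $dF^N=\sum_{c} a_c F^{N-c}dF^c$.

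Dehomogenizing at $x=1$, the requirement becomes that $p(y)=\sum_{c\le m_1}a_c y^c$ agrees with $y^N$ to order $m_1$ at $y=1$. So $p$ is the truncated Taylor polynomial $p(y)=\sum_{l=0}^{m_1}\binom{N}{l}(y-1)^l$. Expanding in powers of $y$ gives
$$a_c=\sum_{l=c}^{m_1}(-1)^{l-c}\binom{N}{l}\binom{l}{c}=\binom{N}{c}\sum_{j=0}^{m_1-c}(-1)^j\binom{N-c}{j}=(-1)^{m_1-c}\binom{N}{c}\binom{N-c-1}{m_1-c},$$
where the last step uses the partial alternating sum $\sum_{j=0}^{M}(-1)^j\binom{n}{j}=(-1)^M\binom{n-1}{M}$.

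Simplifying the product of binomials,
$$\binom{N}{c}\binom{N-c-1}{m_1-c}=\frac{N!}{c!\,(m_1-c)!\,(k-1)!\,(N-c)}.$$
Multiplying both sides of $dF^N=\sum_c a_cF^{N-c}dF^c$ by $(-1)^{m_1}$ then gives exactly the claimed formula. As a sanity check, the case $m_1=k=1$ gives $-d_1F^2=F^2d_1-2Fd_1F$, which matches the example.
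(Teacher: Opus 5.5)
Your proof is correct, and it takes a genuinely different route from the paper. The paper argues by induction on $k$. The base case $k=1$ is the relation $\ad_F^{m_1+1}(d_{m_1})=0$ with the $c=m_1+1$ term moved to the other side. The inductive step right-multiplies the identity for $k$ by $F$, reindexes, and uses the base case again to rewrite the stray term $F^k d_{m_1}F^{m_1+1}$. It then checks a rational identity showing the combined coefficients are $\frac{m_1+k+1}{m_1+k+1-c}$ times the old ones.

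You linearize instead. The map $x^ay^b\mapsto F^a d_{m_1}F^b$ turns multiplication by $x$ and $y$ into left and right composition with $F$. So its kernel contains the degree-$N$ part of the ideal $((x-y)^{m_1+1})$. The lemma then becomes the statement that $y^N$ is congruent modulo $(y-1)^{m_1+1}$ to its truncated Taylor expansion at $y=1$. The individual steps all check out:
\begin{itemize}
\item the dehomogenization, including that the quotient has the right degree $k-1$;
\item the coefficient extraction via $\sum_{j=0}^{M}(-1)^j\binom{n}{j}=(-1)^M\binom{n-1}{M}$, which is valid because $n=N-c\ge k\ge 1$;
\item the simplification of $\binom{N}{c}\binom{N-c-1}{m_1-c}$ and the final signs.
\end{itemize}
Your route derives the coefficients rather than verifying a formula one must already know: they are the coefficients of the remainder of $y^N$ on division by $(y-1)^{m_1+1}$. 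It also surfaces the same partial alternating binomial sum that the paper uses again in Case 2(b)(ii) of the proof of Theorem~\ref{thm:d2=0}. The paper's induction is more pedestrian but stays entirely at the operator level.

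Both proofs rest on the same single input, $\ad_F^{m_1+1}(d_{m_1})=0$, which the paper also simply asserts in the form $d_{-1|m_1+1}=0$. Your main justification for it is the right kind: $d_{m_1}$ is an extremal weight vector for the adjoint $\mathfrak{sl}_2$-action, so its $\ad_F$-string has length $m_1+1$. However, drop the ``equivalently'' grading remark. Nothing in the listed properties of $\widehat{H}(K)$ forbids a nonzero endomorphism of degree $(-3,2m_1+1,-2)$, so the grading alone does not force the vanishing.
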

\begin{proof}
We prove this equality by induction on $k$. To prove the base case $k =1$, we use the relation $\ad_F^{m_1+1}(d_{m_1})= d_{-1|m_1+1}=0$. In its expanded form it is equivalent to
$$\sum_{c=0}^{m_1+1} (-1)^c \binom{m_1+1}{c} F^{m_1+1-c}d_{m_1}F^c =0.$$
Isolating the case $c=m_1+1$, this can be rewritten as
$$\sum_{c=0}^{m_1} (-1)^c \binom{m_1+1}{c} F^{m_1+1-c}d_{m_1}F^c =(-1)^{m_1}d_{m_1}F^{m_1+1},$$
which is exactly the equality obtained by substituting $k=1$ in the original equation. This establishes the base case.

Let us now assume that the original equation is true for some $k \in \N$. Then
$$(-1)^{m_1}d_{m_1}F^{m_1+k+1} = \frac{(m_1+k)!}{(k-1)!}\ \sum_{c=0}^{m_1}\frac{(-1)^c}{(m_1-c)! \ c!} \cdot \frac{1}{m_1+k-c}F^{m_1+k-c}d_{m_1}F^{c+1}$$
by the induction hypothesis. We now perform a series of algebraic manipulations on the right-hand side expression. By reindexing the sum, it is equal to
$$
\frac{(m_1+k)!}{(k-1)!}\ \sum_{c=1}^{m_1+1}\frac{(-1)^{c-1}}{(m_1+1-c)! \ (c-1)!} \cdot \frac{1}{m_1+k+1-c}F^{m_1+k+1-c}d_{m_1}F^{c}.
$$
Isolating the summand $c=m_1+1$ gives the expression
\begin{align*}
&\frac{(m_1+k)!}{(k-1)!}\ \sum_{c=1}^{m_1}\frac{(-1)^{c-1}}{(m_1+1-c)! \ (c-1)!} \cdot \frac{1}{m_1+k+1-c}F^{m_1+k+1-c}d_{m_1}F^{c}\\
&\qquad + \frac{(m_1+k)!}{(k-1)!}\cdot \frac{(-1)^{m_1}}{0! \ m_1!} \cdot \frac{1}{k} F^kd_{m_1}F^{m_1+1}.
\end{align*}
We notice that $(-1)^{m_1} d_{m_1}F^{m_1+1}$ is hidden in plain sight within the last term of the expression. This lets us use the base case to rewrite the expression yet again as
\begin{align*}
&\frac{(m_1+k)!}{(k-1)!}\ \sum_{c=1}^{m_1} \frac{(-1)^{c-1}}{(m_1+1-c)! \ (c-1)!} \cdot \frac{1}{m_1+k+1-c} F^{m_1+k+1-c}d_{m_1}F^{c}\\
&\qquad + \frac{(m_1+k)!}{k! \ m_1!}\ F^k \sum_{c=0}^{m_1}(-1)^c \binom{m_1+1}{c} F^{m_1+1-c}d_{m_1}F^c.
\end{align*}
We now isolate the summand $c=0$ from the second sum and then combine both sums into one. This gives us
\begin{align*}
&\frac{(m_1+k)!}{k!}\ \sum_{c=1}^{m_1} \left( \frac{k (-1)^{c-1}}{(m_1+1-c)! (c-1)!} \cdot \frac{1}{m_1+k+1-c} + \frac{(m_1+1) (-1)^c}{c! (m_1+1-c)!} \right)
F^{m_1+k+1-c}d_{m_1}F^{c}\\
&\qquad + \frac{(m_1+k)!}{k! \ m_1!} F^{m_1+k+1}d_{m_1}F^0,
\end{align*}
and after some rearranging
\begin{align*}
&\frac{(m_1+k)!}{k!}\ \sum_{c=1}^{m_1} \frac{(-1)^c}{(m_1-c)! \ c!}\left( \frac{-kc}{m_1+1-c} \cdot \frac{1}{m_1+k+1-c} + \frac{m_1+1}{m_1+1-c} \right)
F^{m_1+k+1-c}d_{m_1}F^{c}\\
&\qquad + \frac{(m_1+k+1)!}{k!} \cdot \frac{1}{m_1!\ 0!}\cdot \frac{1}{m_1+k+1} F^{m_1+k+1}d_{m_1}F^0.
\end{align*}
The expression inside the parentheses evaluates to $\frac{m_1+k+1}{m_1+k+1-c}$. This means that the entire expression is equal to
$$\frac{(m_1+k+1)!}{k!}\ \sum_{c=0}^{m_1} \frac{(-1)^c}{(m_1-c)! \ c!}\cdot \frac{1}{m_1+k+1-c}
F^{m_1+k+1-c}d_{m_1}F^{c},$$
which is precisely the right-hand side of the original equation for $k+1$. This concludes the inductive step and the proof of the lemma.
\end{proof}
We are now able to conclude that $\partial$ is a differential.
\begin{theorem}\label{thm:d2=0}
$\partial^2=0$.
\end{theorem}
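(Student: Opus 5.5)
We prove the vanishing of each inner sum, as announced before Example \ref{example:22}: for all $n_i, n_j \in \N$,
$$\sum_{\substack{i_1+i_2=n_i \\ j_1+j_2=n_j}} d_{i_2|j_2} d_{i_1|j_1}=0 .$$
Rather than expanding everything into words in $F$ and $d_n$ as in Lemma \ref{lemma:d_{i|j}}, the plan is to use that $\ad_F$ is a derivation of composition. Then only three inputs are needed:
\begin{itemize}
\item the definition $d_{i|j}=\frac{1}{j!}\ad_F^j(d_{i+j})$;
\item the anticommutation $d_md_n+d_nd_m=0$, including $d_n^2=0$;
\item the vanishing $\ad_F^{m+1}(d_m)=0$, i.e. $d_{-1|m+1}=0$, already used in the base case of Lemma \ref{lemma:induction equation}.
\end{itemize}

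\textbf{Step 1 (partition by diagonals).} As in Example \ref{example:22}, set $N=n_i+n_j$ and split the sum according to $m_1=i_1+j_1$ and $m_2=i_2+j_2$, so $m_1+m_2=N$ and $m_1,m_2\ge 1$. For fixed $(m_1,m_2)$ the summands are indexed by $j_1+j_2=n_j$ subject to $0\le j_1\le m_1$ and $0\le j_2\le m_2$; the second index is then determined by $i_1=m_1-j_1$, $i_2=m_2-j_2$, and $i_1+i_2=n_i$ holds automatically. The $(m_1,m_2)$-part equals
$$\sum_{\substack{j_1+j_2=n_j\\ j_1\le m_1,\ j_2\le m_2}} \frac{1}{j_2!\,j_1!}\,\ad_F^{j_2}(d_{m_2})\,\ad_F^{j_1}(d_{m_1}).$$

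\textbf{Step 2 (remove the constraints and apply Leibniz).} Since $\ad_F^{j}(d_m)=0$ for $j\ge m+1$, the constraints $j_1\le m_1$ and $j_2\le m_2$ can be dropped without changing the sum. The sum then runs over all $j_1+j_2=n_j$ with $j_1,j_2\ge 0$. Because $\ad_F$ is a derivation, $\ad_F(AB)=\ad_F(A)B+A\,\ad_F(B)$, and the Leibniz rule gives
$$\frac{1}{n_j!}\ad_F^{n_j}(AB)=\sum_{j_1+j_2=n_j}\frac{1}{j_2!\,j_1!}\ad_F^{j_2}(A)\,\ad_F^{j_1}(B).$$
Hence the $(m_1,m_2)$-part equals $\frac{1}{n_j!}\ad_F^{n_j}(d_{m_2}d_{m_1})$. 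This is exactly the mechanism behind the cancellations computed by hand in Example \ref{example:22}.

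\textbf{Step 3 (pair the diagonals).} Summing over all splittings, the inner sum becomes
$$\frac{1}{n_j!}\ad_F^{n_j}\Big(\sum_{m_1+m_2=N} d_{m_2}d_{m_1}\Big).$$
The argument of $\ad_F^{n_j}$ vanishes, so the inner sum vanishes.
\begin{itemize}
\item For $m_1\neq m_2$, the pairs $(m_1,m_2)$ and $(m_2,m_1)$ together contribute $d_{m_2}d_{m_1}+d_{m_1}d_{m_2}=0$.
\item When $N$ is even, the middle term is $d_{N/2}^2=0$.
\end{itemize}
Summing over $n_i,n_j$ then gives $\partial^2=0$.

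The main point requiring care is Step 2: the truncation $j\le m$ built into the definition of $\partial$ must coincide exactly with the vanishing of $\ad_F^{j}(d_m)$ for $j>m$. This rests on the relation $\ad_F^{m+1}(d_m)=0$, which I take from the properties in Subsection \ref{subsection:our algebraic structure} (the statement $d_{-1|m+1}=0$). If one preferred not to rely on it, it should follow from $\mathfrak{sl}_2$-representation theory: $d_m$ is a weight vector for $\ad_H$, and the $\ad_F$-string through it has length $m+1$.

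Lemma \ref{lemma:induction equation} is the expanded form of this truncation. I would mention it only to reconcile the argument with the explicit computation in Example \ref{example:22}, not as a necessary step.
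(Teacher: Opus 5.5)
Your proof is correct. It reaches the same intermediate identity as the paper, but by a shorter and more conceptual route.

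\textbf{What the paper does.} The paper expands every $d_{i|j}$ into words $F^a d_{m_2} F^b d_{m_1} F^c$ using Lemma \ref{lemma:d_{i|j}}, assumes $n_j\le n_i$, and splits into two cases.
\begin{itemize}
\item In the case $n_j\le m_1,m_2\le n_i$, no truncation binds, and the coefficients with $b\ge 1$ vanish by $\sum_k(-1)^k\binom{b}{k}=0$.
\item In the case $m_1<n_j\le n_i<m_2$ (and its mirror), only $j_1\le m_1$ binds, and Lemma \ref{lemma:induction equation} reassembles the truncated sum.
\end{itemize}
In both cases it ends with $\frac{1}{n_j!}\ad_F^{n_j}(d_{m_2}d_{m_1})$, which is then paired with the $(m_2,m_1)$ term.

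\textbf{How your version relates.} Your Step 2 is exactly this computation done at the operator level. The binomial cancellation is the Leibniz rule for the derivation $\ad_F$ written out in monomials. Lemma \ref{lemma:induction equation} is the relation $\ad_F^{m+1}(d_m)=0$ pushed through powers of $F$, and you use that relation directly to drop the truncation.

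\textbf{What each version buys.} Yours needs neither auxiliary lemma and no case split. In particular it needs no reduction to $n_j\le n_i$. The paper dispatches $n_j>n_i$, where both truncations can bind at once (e.g.\ $n_i=1$, $n_j=3$, $m_1=m_2=2$), by appealing to symmetry or an unspecified analogous computation. Your argument treats all $(n_i,n_j)$ uniformly and extends verbatim to the pure $U^N$ and $V^N$ coefficients. The paper's version buys only explicitness, matching the hand computation of Example \ref{example:22}.

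\textbf{Shared input.} Both proofs rest on the same inputs. The relation $\ad_F^{m+1}(d_m)=0$ is not literally among the properties listed in Subsection \ref{subsection:our algebraic structure}. However, the paper invokes it in the same way, in the base case of Lemma \ref{lemma:induction equation}, so your reliance on it is no weaker. If you want to justify it through $\mathfrak{sl}_2$-theory instead, being an $\ad_H$-weight vector is not enough. You need $d_m$ to be an extremal weight vector, i.e.\ annihilated by $\ad_E$, of the appropriate weight in the finite-dimensional representation $\mathrm{End}(\widehat{H}(K))$.
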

\begin{proof}
To show that $\partial^2=0$, we will show that $$\sum_{\substack{i_1+i_2=n_i \\ j_1+j_2=n_j}}  \ d_{i_2|j_2}  d_{i_1|j_1}=0$$
for all $n_i, n_j \in \N$. Without loss of generality, we assume that $n_j \leq n_i$. The other case follows by the symmetry of Khovanov-Rozansky homology or an analogous computation.

Armed with the formula $d_{i|j} = \frac{1}{j!}\sum_{k=0}^{j} (-1)^k \binom{j}{k} F^{j-k}d_{i+j}F^k$ from Lemma \ref{lemma:d_{i|j}}, we can inspect the individual summands from $\sum_{\substack{i_1+i_2=n_i \\ j_1+j_2=n_j}}  \ d_{i_2|j_2}  d_{i_1|j_1}$. Expanding the commutators we observe that each term is of the form $F^a d_{m_2} F^b d_{m_1} F^c$ for some $a, b, c \in \N_0$ with $a+b+c=n_j$ and $m_1, m_2 \in \N$ with $m_1+m_2 = n$. In what follows we determine the coefficient of each term and show that those with nonzero coefficients cancel each other out.

Define the sets $A_1=\{ n_j, \dots, n_i \}$ and $A_2 = \{1, \dots, n_j-1, n_i+1, \dots, n-1\}$ and note that they have the property that if $k \in A_l$, then also $n-k \in A_l$ for $l \in \{1,2\}$. This means that both $m_1$ and $m_2$ either belong to $A_1$ or to $A_2$ and separates our analysis into two cases.
\begin{enumerate}
    \item Case 1: $m_1, m_2 \in A_1$.

    In particular, this means that $m_1, m_2 \geq n_j = a + b +c$. The inequalities we need are even slightly weaker, $m_1 \geq b+c$ and $m_2 \geq a+b$. Together they imply that within the term $F^ad_{m_2}F^bd_{m_1}F^c$, the part $F^ad_{m_2}F^{b-k}$ can come from $d_{i_2|j_2}$ and the part $F^{k}d_{m_1}F^c$ can come from $d_{i_1|j_1}$ for any $k \in \{0, \dots, b\}$. Fixing a $k$, the coefficient of $F^ad_{m_2}F^bd_{m_1}F^c$ is $$\frac{1}{(a+b-k)!} (-1)^{b-k}\binom{a+b-k}{b-k} \cdot \frac{1}{(c+k)!} (-1)^c \binom{c+k}{c},$$
    which is equal to $\frac{(-1)^{b+c-k}}{a! \ b! \ c!} \binom{b}{k}$ after simplification. Summing over all $k$ we obtain $\frac{(-1)^{b+c}}{a! \ b! \ c!} \sum_{k=0}^b (-1)^{k} \binom{b}{k}$. Provided $b \geq 1$, this evaluates to $0$ due to a well-known binomial identity, which means that the term $F^ad_{m_2} F^b d_{m_1} F^c$ does not appear in the expansion. If $b=0$, then $F^ad_{m_2}d_{m_1}F^c$ does appear, but so does $F^ad_{m_1}d_{m_2}F^c$ with the same coefficient. Since the differentials $d_{m_1}$ and $d_{m_2}$ anti-commute, \emph{i.e.}, $d_{m_2}d_{m_1}+d_{m_1}d_{m_2}=0$, the two terms cancel each other.

    \item Case 2: $m_1, m_2 \in A_2$.

    Let's first assume $m_1 < m_2$. This means that $m_1 < n_j = a+b+c$ and $m_2 > n_i \geq n_j = a+b+c \geq a+b$. As in case (1), we wish to partition $F^ad_{m_2}F^bd_{m_1}F^c$ into parts $F^ad_{m_2}F^{b-k}$ and $F^kd_{m_1}F^c$ that come from $d_{i_2|j_2}$ and $d_{i_1|j_1}$ respectively. However, not all values $k \in \{0, \dots, b\}$ are allowed anymore in general. We need $c+k \leq m_1$ or equivalently $k \leq m_1-c$. In particular, this means that $F^ad_{m_2}F^bd_{m_1}F^c$ appears with coefficient $0$ if $c > m_1$. In everything that follows we thus assume that $c \leq m_1$. We now have two cases.
    \begin{enumerate}
        \item $b = 0$.
        
        The coefficient of $F^ad_{m_2}d_{m_1}F^c$ is $(-1)^c\frac{1}{a! \ c!}$ as established in Case 1 above.
        \item $b \geq 1$.
        \begin{enumerate}
            \item $b+c \leq m_1$. Then the condition $k \leq m_1-c$ is vacuous and the sum $\frac{(-1)^{b+c}}{a! \ b! \ c!} \sum_{k=0}^b (-1)^{k} \binom{b}{k}$ evaluates to $0$ as in Case 1. Such terms $F^ad_{m_2}F^bd_{m_1}F^c$ therefore do not appear in the sum.
            \item $b+c = m_1 +k$ for some $k \in \N$. The condition $k \leq m_1-c$ means that the term $F^ad_{m_2}F^bd_{m_1}F^c$ has coefficient
            \begin{align*}
            &\frac{(-1)^{m_1+k}}{a! \ (m_1+k-c)! \ c!} \sum_{h=0}^{m_1-c} (-1)^{h} \binom{m_1+k-c}{h}\\
            &\qquad = \frac{(-1)^{m_1+k}(-1)^{m_1-c}}{a! \ (m_1+k-c)! \ c!} \binom{m_1+k-c-1}{m_1-c}\\
            &\qquad =\frac{(-1)^{k-c}}{a! \ } \cdot \frac{1}{(k-1)!} \cdot \frac{1}{(m_1-c)! \ c!} \cdot \frac{1}{m_1+k-c}.
            \end{align*}
            Summing them over all $c$ we obtain 
            $$\sum_{c=0}^{m_1} \frac{(-1)^k}{a!}\cdot\frac{1}{(k-1)!}\cdot \frac{(-1)^c}{(m_1-c)! \ c!} \cdot \frac{1}{m_1+k-c}F^{m_1+k-c}d_{m_1}F^c,$$
            which is equal to 
            $$(-1)^{m_1+k}\frac{1}{a! \ (m_1+k)!}F^ad_{m_2}d_{m_1}F^{m_1+k}$$
            by Lemma \ref{lemma:induction equation}.
        \end{enumerate}
    \end{enumerate}
    Let us now summarize the terms that are still present in the sum. From the case $b=0$, we obtained the terms $(-1)^{n_j-a}\frac{1}{a! \ (n_j-a)!}F^ad_{m_2}d_{m_1}F^{n_j-a}$. The case $b \geq 1$ gave us several terms with nonzero coefficients, but we condensed them into the terms $(-1)^{m_1+k} \frac{1}{a! \ (m_1+k)!} F^ad_{m_2}d_{m_1} F^{m_1+k}$. Since $b+c = m_1+k = n_j -a$, they actually only differ in their description. We can add them together into
    $$\sum_{h = 0}^{n_j}(-1)^h \frac{1}{(n_j-h)! \ h!} F^{n_j-h}d_{m_2}d_{m_1} F^h = \ad_F^{n_j}(d_{m_2}d_{m_1}).$$
    Recall that we specialized to the case $m_1 < m_2$ at the beginning of Case 2. Running the same argument with $m_1 > m_2$ gives another summand $\ad_F^{n_j}(d_{m_1}d_{m_2})$. The differentials $d_{m_1}$ and $d_{m_2}$ anti-commute, \emph{i.e.}, $d_{m_1}d_{m_2}+d_{m_2}d_{m_1}=0$ and $\ad_F$ is linear. Therefore $\ad_F^{n_j}(d_{m_2}d_{m_1}) + \ad_F^{n_j}(d_{m_1}d_{m_2})=0$ as required.
\end{enumerate}
\end{proof}

\section{Invariance and connected sum isomorphism}
The main result of this section is the proof that the isomorphism class of $(\CH(K), \partial)$ is a knot invariant, \emph{i.e.} remains unchanged under Reidemeister moves. This essentially amounts to giving precise citations to the invariance results of the hat version of Khovanov-Rozansky homology by Khovanov-Rozansky \cite{khovanov2008matrix}, the differentials $d_{n|0}$ by Rasmussen \cite{rasmussen2016some}, the $\mathfrak{sl}_2$ triple $(E, F, H)$ action from Theorem \ref{ex-hypothesis}, and justifying that all mentioned invariance results are compatible.
\begin{theorem}\label{thm:invariance}
$\partial$ is a knot invariant.
\end{theorem}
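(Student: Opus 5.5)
The plan is to show that for any two diagrams $D, D'$ of $K$ related by a Reidemeister move, the isomorphism of $\widehat{H}(K)$ induced by the invariance chain homotopy equivalence extends $\F[U,V]$-linearly to an isomorphism $\CH_D(K)\to\CH_{D'}(K)$ of chain complexes. Since $\CH(K)=\widehat{H}(K)\otimes_\F\F[U,V]$ and $\partial=\sum U^iV^j d_{i|j}$, it suffices to find a graded vector space isomorphism $\Phi:\widehat{H}_D(K)\to\widehat{H}_{D'}(K)$ with $\Phi\circ d_{i|j}=d_{i|j}\circ\Phi$ for all $(i,j)\in\N_0^2\setminus\{(0,0)\}$. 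Then $\Phi\otimes\mathrm{id}$ is a graded $\F[U,V]$-module isomorphism intertwining the two differentials $\partial$. By Theorem \ref{thm:d2=0} these are genuine chain complexes, so this gives invariance of the isomorphism type.

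First, I take $\Phi$ to be the map on homology induced by the chain homotopy equivalences of \cite[Section 5.4]{rasmussen2016some}, which Rasmussen uses for the invariance of the spectral sequences defining the $d_n$. I need to check that these are the same equivalences (up to homotopy) as Khovanov--Rozansky's in \cite{khovanov2008matrix}, so that $\Phi$ is the standard identification of $\widehat{H}$. I also need to check that $\Phi$ preserves the triple grading, translated into $(\gr_U,\gr_V,\gr_A)$ via Subsection \ref{subsection:grading conventions}. Second, Rasmussen's invariance theorem for the spectral sequences implies that $\Phi$ intertwines each $d_{n}=d_{n|0}$. The $d_n$ arise as the first nontrivial differential on the $E$-page, and for the reduced theory the relevant page is identified with $\widehat{H}$ itself, so the map induced by a filtered equivalence commutes with $d_n$. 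Third, Theorem \ref{ex-hypothesis} gives $\Phi F=F\Phi$. Since $d_{i|j}=\frac{1}{j!}\ad_F^j(d_{i+j})$, I then argue by induction on $j$: if $\Phi$ commutes with $F$ and with $d_{i+1|j-1}$, it commutes with $[F,d_{i+1|j-1}]$, and hence with $d_{i|j}$.

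The main obstacle is the second step, together with the compatibility of the equivalences used in the three sources. Rasmussen's $d_n$ are well defined on $\widehat{H}$ only after identifying an $E_2$-type page with $\widehat{H}$. Reidemeister invariance of the spectral sequence gives commuting with $d_n$ only for the isomorphism induced on that page, and I must make sure this is the same $\Phi$ as the one Theorem \ref{ex-hypothesis} says commutes with $F$. To handle this, I would first verify that Rasmussen's equivalences are filtered maps, so they induce maps on every page that agree on the $E_2$-page with the homology map. I would then invoke the clause of Theorem \ref{ex-hypothesis} asserting that $F$ commutes with precisely those equivalences. Any ambiguity up to homotopy is harmless because only the induced map on homology enters. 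Finally, since each Reidemeister move gives a commuting isomorphism, composing along a sequence of moves gives an isomorphism of chain complexes over $\F[U,V]$.
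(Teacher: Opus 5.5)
Your proposal is correct and follows the paper's proof. Both arguments take the Reidemeister equivalences of \cite[Section 5.4]{rasmussen2016some}, which commute with the $d_{n|0}$ by Rasmussen and with $F$ by Theorem \ref{ex-hypothesis}, and conclude that every $d_{i|j}$ (a polynomial in $F$ and $d_{i+j}$), and hence $\partial$, is preserved. The only differences are that you get this polynomial by induction on $j$ rather than from Lemma \ref{lemma:d_{i|j}}, and that you spell out the $\F[U,V]$-linear extension and the compatibility of the identifications of $\widehat{H}$, which the paper asserts in one line.
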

\begin{proof}
Let $K \subset S^3$ be a knot and let $D$ be a braid diagram of $K$. Khovanov and Rozansky constructed in \cite{khovanov2008matrix} a chain complex of matrix factorizations that is associated to $D$ and whose homology is (by definition) Khovanov-Rozansky homology. Using the language of double chain complexes $(C(D), d_{+}, d_v)$, Rasmussen discovered an original reformulation of their construction in \cite{rasmussen2016some}. He also constructed an additional infinite family of higher differentials $d_n$ for $n \in \{-1\} \cup \N$ that give rise to spectral sequences $E(n)$ from $\widehat{\mathit{H}}(K)$ to $\mathfrak{sl}_n$ homology \cite[Theorem 2]{rasmussen2016some}. Following Khovanov and Rozansky's proof of invariance of $\widehat{\mathit{H}}(K)$, Rasmussen showed that all differentials $d_n$ commute with the \emph{same} chain homotopy equivalences \cite[Section 5.4]{rasmussen2016some}. Therefore, all $E(n)$ and in particular all $d_n$ are knot invariants. In the language of this paper $d_{n|0} = d_n$. Moreover, by Theorem \ref{ex-hypothesis}, the endomorphisms $E, F, H : \widehat{\mathit{H}}(K) \to \widehat{\mathit{H}}(K)$ commute with these chain homotopy equivalences as well. It follows in particular that $F$ is a knot invariant. Since the operators $d_{i|j}$ are compositions of $F$ and $d_{i+j}$ by Lemma \ref{lemma:d_{i|j}}, it follows that all $d_{i|j}$ are knot invariants and hence so is $\partial$.
\end{proof}

We can now prove the main result of this paper.
\begin{proof}[Proof of Theorem \ref{thm:full HOMFLY complex}]
$(\CH(K), \partial)$ is defined as a $\Z\oplus\Z\oplus\Z$-graded free $\F[U,V]$-module with an endomorphism in Section \ref{sec:definition}. Section \ref{sec:d2=0} culminating in Theorem \ref{thm:d2=0} shows that $\CH(K)$ is a chain complex and Theorem \ref{thm:invariance} shows that its isomorphism type is a knot invariant. This proves (1). For (2), note that setting $U=V=0$ simply discards the additional structure we sought to investigate in this paper and thus recovers the reduced Khovanov-Rozansky homology $\widehat{\mathit{H}}(K)$. 
\end{proof}

Our next goal is to establish that the connected sum of knots corresponds to a tensor product of Khovanov-Rozansky complexes, \emph{i.e.} we prove Theorem \ref{thm:connected sums}.
\begin{proof}[Proof of Theorem \ref{thm:connected sums}]
The corresponding result $\widehat{\mathit{H}}(K_1 \# K_2) \cong \widehat{\mathit{H}}(K_1) \otimes_{\F} \widehat{\mathit{H}}(K_2)$ for the reduced Khovanov-Rozansky homology was already proven in \cite[Lemma 7.8]{rasmussen2016some}. It follows that the two sides of the putative isomorphism $\CH(K_1\# K_2) \cong \CH(K_1) \otimes_{\F[U,V]} \CH(K_2)$ are isomorphic as $\Z\oplus\Z\oplus\Z$ graded free $\F[U,V]$-modules. It remains to upgrade this relation to an isomorphism of chain complexes over $\F[U,V]$ by studying $\partial$. In light of the definition of $\partial$ as $\partial = \sum_{n=1}\sum_{i+j=n} U^iV^j d_{i|j}$, it is sufficient to understand the endomorphisms $d_{i|j}$, which are in turn iterated compositions of $d_{n|0}$ and the endomorphism $F$ by Lemma \ref{lemma:d_{i|j}}. By Theorem \ref{ex-hypothesis} with $k=j$ and $n=i+j$, we have that $\ad_F^j(d_{i+j}) = j! \, d_{i | j}$ commutes with the isomorphism from \cite[Lemma 7.8]{rasmussen2016some}. It follows that so does $d_{i|j}$ and hence $\partial$. Therefore, $\CH(K_1 \# K_2) \cong \CH(K_1) \otimes_{\F[U,V]} \CH(K_2)$ as required.
\end{proof}

\section{Structure theorem and comparison with knot Floer homology}\label{sec:structure thm}
This section considers an intermediate version of Khovanov-Rozansky homology. Instead of working over the ring $\F$ (as in the classical setup of Khovanov-Rozansky homology) or over the ring $\F[U,V]$ (as in full Khovanov-Rozansky homology studied in this paper so far), we investigate the Khovanov-Rozansky chain complex $\CH_{\mathcal{R}_1}(K) = \CH(K) \otimes_{\F[U,V]} \mathcal{R}_1$ over $\mathcal{R}_1 = \frac{\F[U,V]}{(UV)}$. Our main motivation for this pursuit comes from knot Floer homology where the algebraic structure of such complexes has been leveraged to extract concordance information from $\CFK_{\mathcal{R}_1}(K)$. With the aim of replicating this development in the Khovanov-Rozansky setting, we begin by classifying the chain homotopy types of Khovanov-Rozansky chain complexes $\CH_{\mathcal{R}_1}(K)$ in Theorem \ref{thm:HOMFLY splitting}. We then comment on the results and compare them with knot Floer homology.

In order to state the structure theorem for Khovanov-Rozansky homology, we need the standard complexes, snake complexes and local systems. All of these types of chain complexes over $\mathcal{R}_1$ were already considered in the knot Floer homology context. Standard complexes were defined in \cite{dai2021more} and generalized into snake complexes in \cite{popovic2023link}, which is also where the local systems were introduced. 
\begin{definition}
\label{def:standard complex}
Let $n \in 2\N_0$ and let $a_1, \dots, a_n$ be a sequence of nonzero integers. The \emph{standard complex} $C(a_1, \dots, a_n)$ is a free chain complex over $\mathcal{R}_1$ with a distinguished basis $B=\{x_{0}, \dots, x_n\}$ and a differential $\partial$ defined as follows. For each odd $i$, there is a horizontal arrow of length $|a_i|$ connecting $x_i$ and $x_{i-1}$. For each even $i$, there is a vertical arrow of length $|a_i|$ connecting $x_i$ and $x_{i-1}$. The direction of the arrow is determined by the sign of $a_i$, as follows. If $a_i > 0$, then the arrow goes from $x_i$ to $x_{i-1}$, and if $a_i < 0$, then the arrow goes from $x_{i-1}$ to $x_i$. The $\Z\oplus\Z\oplus\Z$ grading on $C(a_1, \dots, a_n)$ is uniquely determined by the condition $\gr(x_0)=(0, 0, 0)$ and that $\gr(\partial) = (-1, -1, -2)$.
\end{definition}
\begin{definition}
\label{def:snake complex}
\cite[Definition 2.12]{popovic2023link}
Let $m \in 2\N_0+1$ and let $b_1, \dots, b_{m}$ be a sequence of nonzero integers. A \emph{horizontal snake complex} $S_h(b_1, \dots, b_{m})$ is a free chain complex over $\mathcal{R}_1$ with a distinguished basis $B=\{x_0, \dots, x_m\}$ and a differential $\partial$ defined as follows. For each odd $i$, there is a horizontal arrow of length $|b_i|$ connecting $x_i$ and $x_{i-1}$. For each even $i$, there is a vertical arrow of length $|b_i|$ connecting $x_i$ and $x_{i-1}$. The direction of the arrow is determined by the sign of $b_i$, as follows. If $b_i > 0$, then the arrow goes from $x_i$ to $x_{i-1}$, and if $b_i < 0$, then the arrow goes from $x_{i-1}$ to $x_i$. The $\Z\oplus\Z\oplus\Z$ grading on $S_h(b_1, \dots, b_m)$ is allowed to be arbitrary subject to the condition that $\gr(\partial) = (-1, -1, -2)$.

Similarly, let $m \in 2\N_0$ and let $b_0, \dots, b_m$ be a sequence of nonzero integers. A \emph{vertical snake complex} $S_v(b_0, \dots, b_m)$ is a free chain complex over $\mathcal{R}_1$ with a distinguished basis $B=\{x_{-1}, \dots, x_m\}$ and a differential $\partial$ defined exactly as above.
    
A \emph{snake complex} refers to either a standard complex, a horizontal snake complex, or a vertical snake complex.
\end{definition}
For internalizing the notions of different flavors of snake complexes, Figure \ref{fig:example complexes} is likely to be a lot more illuminating than the preceding definitions. Horizontal snake complexes start and end with horizontal arrows, vertical snake complexes start and end with vertical arrows and standard complexes start with a horizontal and end with a vertical arrow.
\input{images/example_of_a_standard_complex.tex}

\begin{definition}
Let $(L, \partial)$ be a finitely generated free chain complex over $\mathcal{R}_1$ with no arrows of length 0. Then $L$ is an \emph{indecomposable local system} if it
\begin{enumerate}
    \item admits a simplified decomposition (Definition 3.1 in \cite{popovic2023link}),
    \item is indecomposable as a chain complex over $\mathcal{R}$, and
    \item has torsion homology (Definition 2.2 in \cite{popovic2023link}).
\end{enumerate}
The $\Z\oplus\Z\oplus\Z$ grading on $S_h(b_1, \dots, b_m)$ is allowed to be arbitrary subject to the condition that $\gr(\partial) = (-1, -1, -2)$.

A \emph{local system} is a direct sum of indecomposable local systems of the same shape (Definition 3.5 in \cite{popovic2023link}) and in the same position in the plane. A local system is trivial if it admits a simplified basis (Definition 2.5 in \cite{popovic2023link}) and nontrivial otherwise.
\end{definition}

\vspace{1em}

Because the papers \cite{dai2021more,popovic2023link} study knot Floer homology, they defined standard complexes, snake complexes and local systems to be $\Z\oplus\Z$ graded with a differential $\partial$ of degree $\gr(\partial)=(-1,-1)$. The Khovanov-Rozansky chain complex $\CH_{\mathcal{R}_1}(K)$ admits an additional grading $\gr_A$, \emph{i.e.} it is $\Z\oplus\Z\oplus\Z$ graded and the differential $\partial$ has degree $\gr(\partial)=(-1,-1,-2)$. This small difference leads to some additional rigidity in this class of complexes which is explored further later in this section. For now, disregarding the third grading yields the main structure theorem.
\begin{proof}[Proof of Theorem \ref{thm:HOMFLY splitting}]
Since $\widehat{\mathit{H}}(K)$ is a $\Z\oplus\Z\oplus\Z$ graded finite dimensional vector space, $\CH_{\mathcal{R}_1}(K) = \widehat{\mathit{H}}(K) \otimes_{\F} \mathcal{R}_1$ is a finitely generated free $\Z\oplus\Z\oplus\Z$ graded chain complex over $\mathcal{R}_1$. Forgetting the third grading $\gr_A$, this is exactly the class of complexes studied in Theorem 4.2 of \cite{popovic2023link} and the conclusion follows.
\end{proof}
Although very similar, the structure of $\CH_{\mathcal{R}_1}(K)$ is not identical to the structure of $\CFK_{\mathcal{R}_1}(K)$. In particular, Theorem \ref{thm:HOMFLY splitting} allows an arbitrary number of snake complexes rather than a single standard complex. Nonetheless, in all examples that have been computed so far, which includes all knots with $\leq 11$ crossings \cite{nakagane2025computations, chandler2024structures}, $\CH_{\mathcal{R}_1}(K)$ contains a unique standard complex and we conjecture that this is true in general. Proving this claim would likely come from an isomorphism $\frac{H_*(\CH_{\mathcal{R}_1}(K)/U)}{V-\text{torsion}} \cong \F[V]$, something that is not established very easily since it would require understanding $\partial$, and with it the interactions between different $d_i$'s, better.

Let us also remark that experimental evidence from \cite{nakagane2025computations, chandler2024structures} suggests that $\CH_{\mathcal{R}_1}(K)$ always splits into a standard complex of the form $C(1, -1, \dots, 1, -1)$ or $C(-1, 1, \dots, -1, 1)$ together with some local systems that are $1 \times 1$ squares. More precisely, it is observed in \cite[Proposition 3.31]{chandler2024structures} that this happens for all knots with $\leq 11$ crossings and more generally whenever Rasmussen's $\mathfrak{sl}_1$ spectral sequence collapses after the first differential $d_1$. We do \emph{not} expect or hope this to be true generically, but nonetheless note that Theorem \ref{thm:HOMFLY splitting} can be used to provide a different proof of this result that does not directly appeal to the $\mathfrak{sl}_2$-action.
\begin{proposition}\label{prop:HOMFLY width 1}
Let $K \subset S^3$ be a knot and $\widehat{\mathit{H}}(K)$ its reduced Khovanov-Rozansky homology. If the Rasmussen's $\mathfrak{sl}_1$ spectral sequence from $\widehat{\mathit{H}}(K)$ to $\F$ collapses after the first differential $d_1$, then $\CH(K)$ splits as a direct sum of
\begin{itemize}[label=$-$]
    \item $C(1, -1, \dots, 1, -1)$ or $C(-1, 1, \dots, -1, 1)$, and
    \item some $1 \times 1$ squares.
\end{itemize}
One of the sides of each $1 \times 1$ square is decorated with a $-1$ and all other arrows are decorated with a $1$.
\end{proposition}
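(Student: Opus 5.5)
The plan is to read the hypothesis as a statement about torsion orders, feed it into Theorem \ref{thm:HOMFLY splitting}, and lift back to $\F[U,V]$ at the end.

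\textbf{Step 1: translate the hypothesis.} Setting $V=0$ in $\CH(K)$ gives $\widehat{\mathit{H}}(K)\otimes\F[U]$ with differential $\sum_n U^n d_{n|0}$. Filtering by powers of $U$ recovers Rasmussen's $\mathfrak{sl}_1$ spectral sequence, and $d_{n|0}$ appears at page $n$. Its $E_\infty$ page is $\F$, so $H_*(\CH(K)/V)/U\text{-torsion}\cong\F[U]$. Collapse after $d_1$ means every $U$-torsion class has order exactly $1$. Applying the symmetry $i$ (or the analogous argument with $d_{0|n}$) gives the same conclusion for $\CH(K)/U$ with $V$ in place of $U$.

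\textbf{Step 2: apply Theorem \ref{thm:HOMFLY splitting} and count.} Write $\CH_{\mathcal{R}_1}(K)\simeq S_1\oplus\dots\oplus S_m\oplus L_1\oplus\dots\oplus L_k$. Then compute what each summand type contributes to $H_*(-/V)$ and $H_*(-/U)$:
\begin{itemize}
\item a horizontal arrow of length $\ell$ produces $U$-torsion of order $\ell$ in $H_*(-/V)$, and a vertical arrow produces $V$-torsion in $H_*(-/U)$;
\item a standard complex contributes free rank $1$ on both sides;
\item a horizontal snake contributes free rank $0$ to $H_*(-/V)$, since its generators are paired by horizontal arrows, and rank $2$ to $H_*(-/U)$;
\item a vertical snake does the reverse;
\item local systems have torsion homology, so they contribute no free rank.
\end{itemize}
If $s,h,v$ count standard, horizontal and vertical snakes, this gives $s+2v=1=s+2h$. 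Hence $s=1$ and $h=v=0$. The order-one torsion condition forces every arrow in every summand to have length $1$, so the standard complex is $C(1,-1,\dots,1,-1)$ or $C(-1,1,\dots,-1,1)$. The alternation of signs is forced: two consecutive arrows of the same sign would compose nontrivially in a way incompatible with $\partial^2=0$ and the staircase shape.

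\textbf{Step 3: local systems.} With all arrows of length $1$, an indecomposable local system in the sense of \cite{popovic2023link} is a $1\times1$ square, possibly with a nontrivial monodromy $\lambda$. The $\gr_A$ grading should force the shape to be a single square. After rescaling generators, three sides carry $1$ and the fourth carries $\lambda$, and $\partial^2=0$ around the square forces $\lambda=-1$. Sums of parallel squares then split as direct sums.

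\textbf{Step 4 (main obstacle): lift to $\F[U,V]$.} The statement concerns $\CH(K)$ itself, so the splitting must be lifted from $\mathcal{R}_1$ to $\F[U,V]$. Two things can go wrong: diagonal terms $U^iV^jd_{i|j}$ with $i,j\ge1$ are invisible mod $UV$, and the change of basis is only defined over $\mathcal{R}_1$.
\begin{itemize}
\item I would first try to show that all diagonal terms can be killed using the tri-grading. Both $\gr_U-\gr_V$ and $\gr_U-\tfrac12\gr_A$ are preserved by $\partial$, so each diagonal arrow must connect generators whose positions in the picture are rigidly constrained.
\item In the standard staircase and the unit squares, I expect such a pair to be absent or reachable only through a unit-length path. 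In the latter case the diagonal arrow can be removed by a filtered change of basis $x\mapsto x+U^aV^b y$.
\item I would then argue inductively on $i+j$, using $\partial^2=0$ (Theorem \ref{thm:d2=0}), to check that each such change of basis does not create new diagonal arrows of lower order.
\end{itemize}
This grading bookkeeping is where I expect the real difficulty.
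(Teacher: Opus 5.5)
\textbf{There is a genuine gap, and it sits in Steps 2--3, not only in Step 4.} In Steps 2--3 you use $\partial^2=0$ three times: to make the signs along the staircase alternate, to force $\lambda=-1$, and to split parallel squares. At that point you are still working over $\mathcal{R}_1$, before any lift. A corner $a\to b\to c$ made of one horizontal and one vertical unit arrow has weight $UV$, which is $0$ in $\mathcal{R}_1$, so $\partial^2=0$ says nothing about it. Over $\mathcal{R}_1$ each of the following is an honest complex with unit arrows: the non-alternating $C(1,1)$; a unit square with any monodromy $\lambda\neq0$; two parallel squares glued by a Jordan-block monodromy; and the loop of eight unit arrows around two unit squares meeting at a corner. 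The grading $\gr_A$ cannot exclude any of them, because every arrow lowers $\gr_A$ by exactly $2$ whatever the shape. So two of your claims are false as stated: that an indecomposable local system with unit arrows is a single $1\times1$ square, and that $\gr_A$ should force this shape. The alternation claim in Step 2 is unsupported for the same reason.

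The missing idea is the one the paper's proof actually runs on: do the lift \emph{first}.
\begin{itemize}
\item A homogeneous change of basis over $\mathcal{R}_1$ lifts to one over $\F[U,V]$ (graded Nakayama).
\item In the lifted basis, the terms of $\partial$ that are pure powers of $U$ or of $V$ are exactly the unit arrows of the $\mathcal{R}_1$ splitting. Every other term is divisible by $UV$, and there are no length-$0$ terms.
\item Hence the coefficient of $UVc$ in $\partial^2a$ is computed from unit arrows alone.
\item Over $\F[U,V]$, where $UV\neq0$, $\partial^2=0$ then forces every corner $a\to b\to c$ to be cancelled by a second path through a fourth generator $d$ completing a unit square.
\end{itemize}
This rules out non-alternating corners in the staircase and longer loops. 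It also forces the monodromy of each block of squares to be $-I$, so the blocks split and each square can be normalised to one side decorated with $-1$. This argument does not need the diagonal arrows removed first. Your free-rank count $s+2v=1=s+2h$ is fine; it is equivalent to the paper's count of generators not adjacent to horizontal (respectively vertical) arrows.

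Step 4 is still only a plan. The paper's proof does not spell it out either, although the statement over $\F[U,V]$ needs it. One way to close it:
\begin{itemize}
\item The quantity $\gr_U+\gr_V$ of basis elements is unchanged along unit arrows. It increases by $2(i+j-1)\geq2$ along any arrow of weight $U^iV^j$ with $i+j\geq2$.
\item So the unit-arrow part $\partial_0$ is itself a differential, and the remaining terms form a perturbation that strictly raises $\gr_U+\gr_V$.
\item The staircase lies in a single level, so every component of the perturbation has a square at its source or its target.
\item A square is the Koszul resolution of $\F$, so any cycle with coefficients in $(U,V)$ in such a Hom complex (the other end being minimal) is null-homotopic.
\item The perturbation can therefore be conjugated away one level at a time.
\end{itemize}

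A minor point on Step 1: you only need $H_*(\widehat{\mathit{H}}(K),d_{1|0})\cong\F$, which is how the paper reads the hypothesis. The torsion orders then follow by a rank count. You do not need to identify the $U$-adic spectral sequence of $\CH(K)/V$ with Rasmussen's $E(1)$, and that identification is not established.
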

\begin{proof}
We first determine the corresponding result for $\CH_{\mathcal{R}_1}(K)$ and then lift it to $\CH(K)$ as in the statement of the theorem. Let us choose a basis for $\CH_{\mathcal{R}_1}(K)$ in which the complex splits into snake complexes and local systems as in Theorem \ref{thm:HOMFLY splitting}. The collapse of the $\mathfrak{sl}_1$ spectral sequence after $d_1$ can be rephrased by saying that $H_*(\widehat{\mathit{H}}(K), d_{1|0}) \cong \F$. This means that all but one of the generators of $\CH(K)$ are adjacent to a horizontal arrow of length $1$ in our pictorial representation of $\CH(K)$. Since each generator of $\CH_{\mathcal{R}_1}(K)$ is adjacent to horizontal arrows of a unique length, this means that the picture only contains horizontal arrows of length $1$. By symmetry, we also have that $H_*(\widehat{\mathit{H}}(K), d_{0|1}) \cong \F$ and every generator except one is adjacent to a vertical arrow of length $1$, so all arrows in the picture have length $1$. If
$$
    \begin{tikzpicture}[scale = 0.8]
        \draw[step=1.0,gray,thin] (-0.25, -0.25) grid (1.25,1.25);
        \draw[black, very thick] (0,0) -- (1,0) -- (1,1);
        \filldraw[black] (1,1) circle (2pt) node[anchor=south east]{$a$};
        \filldraw[black] (0,0) circle (2pt) node[anchor=south east]{$c$};
        \filldraw[black] (1,0) circle (2pt) node[anchor=south east]{$b$};
    \end{tikzpicture}
    \hspace{1cm}
    \text{or}
    \hspace{1cm}
    \begin{tikzpicture}[scale = 0.8]
        \draw[step=1.0,gray,thin] (-0.25, -0.25) grid (1.25,1.25);
        \draw[black, very thick] (0,0) -- (0,1) -- (1,1);
        \filldraw[black] (1,1) circle (2pt) node[anchor=south east]{$a$};
        \filldraw[black] (0,0) circle (2pt) node[anchor=south east]{$c$};
        \filldraw[black] (0,1) circle (2pt) node[anchor=south east]{$b$};
    \end{tikzpicture}
$$
ever appear in the picture with arbitrary decorations, then $\partial^2 a = 0$ requires that there be at least one other way of travelling from $a$ to $c$ in two steps. Since all arrows are pointing to the left and downwards, there is exactly one way how this can happen -- the new way must pass through a generator $d$ in the last corner of the square. Simple changes of bases can now be used to guarantee that three of the square sides are decorated with $1$ and one of them is decorated with a $-1$. The only valid arrangement of arrows of length $1$ that avoids both patterns depicted above is an alternating sequence of arrows of length $1$ in the northwest-southeast direction. Since exactly one of the generators is not adjacent to any horizontal arrows, this must be a standard complex, \emph{i.e.} either $C(1, -1, \dots, 1, -1)$ or $C(-1, 1, \dots, -1, 1)$. A change of basis can be used to guarantee that all arrows are decorated with a $1$, concluding the proof.
\end{proof}
See also \cite[Lemma 7]{petkova2013cables} for a closely related and much older result phrased in the language of knot Floer homology. It has recently been generalized in \cite{popovic2025knots}, which suggests some local systems that might appear in Khovanov-Rozansky homology if the Rasmussen's spectral sequence fails to collapse.

There is one more difference between knot Floer and Khovanov-Rozansky homology -- the presence of the third grading $\gr_A$ enables us to say something more about $\CH_{\mathcal{R}_1}(K)$.
\begin{proposition}\label{prop:no pathological behavior}
$\CH_{\mathcal{R}_1}(K) \cong C \otimes_{\F} \mathcal{R}_1$ for some finitely generated chain complex $C$ over $\F$.
\end{proposition}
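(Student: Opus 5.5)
The plan is to use the third grading $\gr_A$ to show that every power of $U$ and $V$ in the differential of $\CH_{\mathcal{R}_1}(K)$ is forced by the gradings of the generators. The differential can then be recorded by an $\F$-linear map on $\widehat{\mathit{H}}(K)$, and this map is the candidate $C$.

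\textbf{Step 1: grading bookkeeping.} Over $\mathcal{R}_1$ the differential is
$$\partial_{\mathcal{R}_1} = \sum_{i\ge1} U^i d_{i|0} + \sum_{j\ge 1} V^j d_{0|j},$$
since every term with $i,j>0$ is killed by $UV=0$. Fix a homogeneous $\F$-basis of $\widehat{\mathit{H}}(K)$. As in Subsection \ref{subsection:pictorial representation}, $\partial$ preserves $w_U=\gr_U-\frac12\gr_A$ and $w_V=\gr_V-\frac12\gr_A$, and we have $w_U(U^iy)=w_U(y)-2i$. Hence if $\partial x$ contains $\lambda U^i y$, then $i=\tfrac12\bigl(w_U(y)-w_U(x)\bigr)$. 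Likewise a $V^j y$ term forces $j=\tfrac12\bigl(w_V(y)-w_V(x)\bigr)$. Because $\gr(\partial)=(-1,-1,-2)$, an arrow $x\to y$ of $\widehat{\mathit{H}}(K)$ coming from $d_{i|0}$ can only be horizontal, and its length is determined by $(x,y)$ alone. The analogous statement holds for $d_{0|j}$ and vertical arrows. In knot Floer homology this is exactly what fails: without $\gr_A$, two different powers $U^i$ and $U^{i'}$ can connect the same pair of generators. That is the ``pathological behaviour'' excluded here.

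\textbf{Step 2: the candidate $C$ and the isomorphism.} Let $C=\widehat{\mathit{H}}(K)$ with
$$D=\sum_i d_{i|0}+\sum_j d_{0|j}.$$
This is the specialisation $U=V=1$, taken with respect to the fixed basis. Define $\Phi\colon C\otimes_\F\mathcal{R}_1\to \CH_{\mathcal{R}_1}(K)$ on the canonical generators $x\otimes 1$ by a monomial rescaling $x\mapsto U^{\alpha(x)}V^{\beta(x)}x$. Here $\alpha$ and $\beta$ are chosen from $w_U$ and $w_V$ (normalised within each of the $\Z^2$ summands from Subsection \ref{subsection:pictorial representation}) so that each arrow length is the difference of the exponents at its endpoints. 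Step~1 is exactly what makes such $\alpha,\beta$ consistent. I would then check that $\Phi$ intertwines $D\otimes 1$ with $\partial_{\mathcal{R}_1}$, is bijective on the relevant free modules, and respects $\gr$ once $C$ carries the induced grading.

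\textbf{Step 3: $D^2=0$, the expected obstacle.} Expanding $\partial_{\mathcal{R}_1}^2=0$ gives $\sum_{i_1+i_2=n}d_{i_2|0}d_{i_1|0}=0$ and its vertical analogue; these also follow from Theorem \ref{thm:d2=0}. However, $D^2$ also contains the mixed terms $d_{i|0}d_{0|j}+d_{0|j}d_{i|0}$, which are invisible over $\mathcal{R}_1$ because they carry $U^iV^j$. Two routes are available, and I would try the first.
\begin{enumerate}
\item Show that the mixed composites are already zero on the homogeneous pieces where they could contribute. Their degree is $(2i-2,2j-2,-4)$, and the $\gr_A$ constraint, combined with the fact from Step~1 that horizontal and vertical lengths are forced, should leave no composable horizontal-then-vertical path between basis elements in the same rescaled position.
\item Failing that, weaken ``chain complex over $\F$'' to the precise sense needed. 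That is, declare $C$ to be $\widehat{\mathit{H}}(K)$ with the two square-zero maps $D_h=\sum_i d_{i|0}$ and $D_v=\sum_j d_{0|j}$, extended $\mathcal{R}_1$-linearly so that $D_hD_v$ cross terms die. The content of the proposition is then Steps~1--2: the $\mathcal{R}_1$-structure carries no information beyond the grading shifts.
\end{enumerate}
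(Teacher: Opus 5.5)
Your Step~1 is the entire content of the paper's proof. The paper only refers back to the subsection on pictorial representations. There, $\partial$ preserves $\gr_U-\gr_V$ and $\gr_U-\frac12\gr_A$, which span the same lattice as your $w_U,w_V$, and $x$ and $U^iV^jx$ differ in these gradings whenever $(i,j)\neq(0,0)$. Hence $\CH(K)$, and likewise $\CH_{\mathcal{R}_1}(K)$, splits over $\F$ into $\Z^2$ finite summands, each containing at most one element $U^iV^jx$ for every generator $x$. As the paper's one-line proof makes explicit, the proposition is meant only as a reformulation of this: the picture is one finite graph, translated around the plane.

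The gap is Step~2. There you read $C\otimes_{\F}\mathcal{R}_1$ as an honest $\mathcal{R}_1$-complex with differential $D\otimes 1$ and try to prove it isomorphic to $\CH_{\mathcal{R}_1}(K)$. Under that reading no isomorphism can exist when $D\neq 0$. Any isomorphism of free $\mathcal{R}_1$-complexes survives $-\otimes_{\mathcal{R}_1}\F$, i.e.\ setting $U=V=0$. This sends the left side to $(\widehat{\mathit{H}}(K),D)$ and the right side to $(\widehat{\mathit{H}}(K),0)$. For $T_{2,-3}$, where $Da=Dc=b$, these have homology of dimensions $1$ and $3$.

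At the level of your map, $x\mapsto U^{\alpha(x)}V^{\beta(x)}x$ is not surjective over $\mathcal{R}_1$ unless $\alpha=\beta=0$, and it is zero on generators where both exponents are positive. It would only become invertible after inverting $U$ and $V$, which $UV=0$ rules out. So the bijectivity check you defer fails. You should stop after Step~1 and take $C$ to be one finite summand (the picture), not $\widehat{\mathit{H}}(K)$ with $U$ and $V$ acting trivially.

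Step~3 is a symptom of the same problem. ``Setting $U=V=1$'' is not a ring map out of $\mathcal{R}_1$, since it would send $0=UV$ to $1$; this is why the mixed terms need not cancel. By $\partial^2=0$ over $\F[U,V]$, they equal minus the composites involving diagonal operators. For example, $d_{2|0}d_{0|1}+d_{0|1}d_{2|0}=-(d_{1|1}d_{1|0}+d_{1|0}d_{1|1})$, and nothing in the gradings forces this to vanish. So route~(1) is unfounded.

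The version of your idea that does work lives over $\F[U,V]$. There $D'=\sum_{i,j}d_{i|j}$ is $\partial$ base-changed along $U,V\mapsto 1$, so $D'^2=0$ automatically. Your rescaling then gives $\CH(K)\otimes_{\F[U,V]}\F[U^{\pm1},V^{\pm1}]\cong(\widehat{\mathit{H}}(K),D')\otimes_{\F}\F[U^{\pm1},V^{\pm1}]$.

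Finally, your diagnosis of the knot Floer pathology is wrong. In knot Floer homology too, the power of $U$ joining two homogeneous generators is fixed by $\gr_U$ alone. What $\gr_A$ adds is that every arrow lowers $\frac12\gr_A$ by exactly one, so the lattice position of a generator is a well-defined function of $(w_U,w_V)$. Without $\gr_A$, a closed loop in the generator graph can have net displacement $(m,m)$, where $|2m|$ is the difference between the numbers of arrows traversed forwards and backwards. That is what produces the essentially infinite examples.
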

\begin{proof}
This was already proven in Section \ref{subsection:pictorial representation} where the pictorial representations of $\CH(K)$ were defined. The proposition is just a formal restatement of the fact that the picture of $\CH(K)$ splits into $\Z^2$ many disconnected summands, each containing one of the elements $U^iV^jx$ for some $x \in \CH(K)$.
\end{proof}
This is in contrast with the knot Floer setting, where \emph{essentially infinite} chain complexes can exist. See Examples $T$ and $D$ in \cite{popovic2023link} for complexes that cannot be drawn in the plane as infinitely many copies of a finite picture. The intuitive content of Proposition \ref{prop:no pathological behavior} is that the third grading $\gr_A$ in Khovanov-Rozansky homology rules out the possibility of such pathological behavior.

Finally, although it can be defined for an arbitrary field, knot Floer homology is typically studied over $\F = \Z/2\Z$. At the moment, this field cannot be used for full Khovanov-Rozansky homology since the paper \cite{rasmussen2016some} requires a field $\F$ with $\mathrm{char}(\F) = 0$ for the construction of differentials. The most studied versions of the two knot homologies are thus not directly comparable, but it would be very interesting nonetheless to investigate the relationship between the two decompositions into snake complexes and local systems. 

\section{New invariants}\label{sec:new invariants}
In this section, the algebraic structure of the Khovanov-Rozansky chain complex $\CH_{\mathcal{R}_1}(K)$ is used to define some knot invariants: $\tau^H$, $\epsilon^H$, and $\phi_j^H$ for all $j \in \N$. We prove they are additive under connected sum of knots, but do not know whether they vanish on all slice knots.

The motivation for $\tau^H$, $\epsilon^H$ and $\phi^H_j$ once again comes from knot Floer homology, although it is not immediately clear how to define analogues of $\tau$, $\epsilon$, and $\phi_j$ since $\CH_{\mathcal{R}_1}(K)$ can potentially contain multiple snake complexes. We begin with the definition of the following relation.
\begin{definition}
    Let $C_1$ and $C_2$ be finitely generated free chain complexes over $\mathcal{R}_1$.
    Define $C_1 \sim C_2$ if $C_1 \oplus L_1 \cong C_2 \oplus L_2$ for some direct sums of local systems $L_1$, $L_2$.
\end{definition}
The direct sums of local systems $L_1$ and $L_2$ in the above definition are allowed to be empty. It is immediate to verify that $\sim$ is an equivalence relation and one should think of its equivalence classes as complexes ``up to local systems''.

We now investigate how $\sim$ behaves under the tensor product of chain complexes over $\mathcal{R}_1$. In the statement of the following lemma, let each instance of $S_h$, $S_v$, $C$, and $L$ denote some, not necessarily the same in all occurrences, horizontal snake complex, vertical snake complex, standard complex, and local system respectively.
\begin{lemma}\label{lemma:sim relations}
We have the following relations up to $\sim$:
\[
    \begin{tabular}{c|ccccccccc}
    $\otimes_{\mathcal{R}_1}$   & $C$   & $S_h$   & $S_v$ & $L$ \\
    \hline\vrule height 12pt width 0pt
    $C$   & $C$   & $S_h$   & $S_v$ & $0$  \\
    $S_h$   & $S_h$   & $S_h \oplus S_h$ & $0$   & $0$  \\
    $S_v$ & $S_v$ & $0$   & $S_v \oplus S_v$   & $0$ \\
    $L$   & $0$   & $0$   & $0$   & $0$ \\
    \end{tabular} 
\]
In words, a tensor product of two standard complexes is $\sim$ equivalent to a standard complex, a product of a standard complex and a horizontal snake complex is $\sim$ equivalent to a horizontal snake complex, and so on. 
\end{lemma}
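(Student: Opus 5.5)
The plan is to reduce everything to the behaviour of the homology of the quotients $C/U$ and $C/V$ modulo torsion, which is how snake complexes and local systems are distinguished in \cite{popovic2023link}. For a finitely generated free complex $C$ over $\mathcal{R}_1$, set $H_V(C) = H_*(C/U)/(V\text{-torsion})$, a free $\F[V]$-module, and $H_U(C) = H_*(C/V)/(U\text{-torsion})$, a free $\F[U]$-module. I will use three facts. A local system has torsion homology, so both of its invariants vanish. A standard complex has $H_U \cong \F[U]$ and $H_V \cong \F[V]$. A horizontal snake complex has $H_U$ of rank $2$ and $H_V = 0$, since its two ends are horizontal arrows. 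Symmetrically, a vertical snake complex has $H_U = 0$ and $H_V$ of rank $2$. I will also use the uniqueness part of Theorem \ref{thm:HOMFLY splitting}, or more precisely of Theorem 4.2 in \cite{popovic2023link}, which applies to every complex considered here.

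The first step is to compute these invariants on tensor products. Because the complexes are free, $(C_1\otimes_{\mathcal{R}_1} C_2)/U \cong (C_1/U)\otimes_{\F[V]}(C_2/U)$. The Künneth theorem over the PID $\F[V]$ then gives
\[
H_V(C_1\otimes C_2)\cong H_V(C_1)\otimes_{\F[V]} H_V(C_2),
\]
since the Tor terms and every summand involving torsion are themselves torsion. The same holds for $H_U$, and ranks multiply. Next I decompose $C_1\otimes C_2$ using Theorem \ref{thm:HOMFLY splitting} and argue that the ranks of $H_U$ and $H_V$ determine the snake summands up to $\sim$: there is one standard complex when both ranks are $1$, there are horizontal snakes accounting for the remaining $U$-rank, and so on. Feeding in the ranks from the table gives the entries directly. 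For $C\otimes C$ we get ranks $(1,1)$. For $C\otimes S_h$ we get $(2,0)$. For $S_h\otimes S_h$ we get $(4,0)$, which is two horizontal snakes. For $S_h\otimes S_v$ we get $(0,0)$. Any product with $L$ gives $(0,0)$. In every case with ranks $(0,0)$, the product should be $\sim 0$.

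The main obstacle is this last translation from ranks to summands. I need a lemma saying that a complex over $\mathcal{R}_1$ whose $H_U$ and $H_V$ both vanish is a direct sum of local systems. I also need a converse statement: that the number of standard, horizontal and vertical snakes is read off from ranks as follows. Each standard complex contributes $(1,1)$, each horizontal snake $(2,0)$, each vertical snake $(0,2)$, and each local system $(0,0)$, and these contributions are additive under direct sums.

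The counting alone is ambiguous: for instance, $(2,2)$ could be two standard complexes or one $S_h$ plus one $S_v$. So the rank argument determines the table only once I know how many standard summands appear. For that I plan a finer argument, refining $H_V$ by its relationship to the horizontal ends, in the spirit of the $\epsilon$-type distinction between vertical arrows at an end. Concretely, a snake end that is a free generator of $H_V$ also survives in $H_U$ exactly when the snake is standard. I will show that this end-pairing is multiplicative under tensor products as well, and then read off the stated entries.
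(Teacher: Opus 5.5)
This is essentially the paper's own argument: decompose $X\otimes_{\mathcal{R}_1}Y$ by Theorem 4.2 of \cite{popovic2023link}, then count snake summands using the free ranks of $H_*(\,\cdot\,/U)$ and $H_*(\,\cdot\,/V)$, which are additive under $\oplus$ and, by your K\"unneth argument, multiplicative under $\otimes_{\mathcal{R}_1}$. Two small corrections: you have $S_h$ and $S_v$ swapped (the ends of $S_h$ meet only horizontal, i.e.\ $U$-, arrows, so they survive in $C/U$, giving $\mathrm{rank}\, H_V(S_h)=2$ and $H_U(S_h)=0$ --- harmless by the $U\leftrightarrow V$ symmetry), and the obstacle you raise never occurs, because $c$ standard, $h$ horizontal and $v$ vertical summands give $(\mathrm{rank}\, H_V, \mathrm{rank}\, H_U)=(c+2h,\,c+2v)$, which determines $(c,h,v)$ as soon as one rank is at most $1$, and by multiplicativity every entry of the table has this property (only $C\otimes C$ has both ranks nonzero, and there they are $(1,1)$), so you can drop the end-pairing refinement, and the two auxiliary statements you list follow immediately from the structure theorem.
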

\begin{proof}
Standard complexes, horizontal snake complexes, vertical snake complexes, and local systems are different types of finitely generated free chain complexes over $\mathcal{R}_1$ and all of those properties are preserved under tensor product. Let $X$ and $Y$ be two such complexes. By the main structural result \cite[Theorem 4.2]{popovic2023link}, we thus know that $X \otimes_{\mathcal{R}_1} Y$ splits as a direct sum of complexes of the aforementioned types. Since we are only interested in $X \otimes_{\mathcal{R}_1}Y$ up to $\sim$, it is sufficient to determine the numbers of snake complexes of each type. This is done by examining the ranks of $\frac{H_*((X \otimes_{\mathcal{R}_1}Y) / U)}{V\text{-torsion}}$ and $\frac{H_*((X \otimes_{\mathcal{R}_1}Y) / V)}{U\text{-torsion}}$ as free $\F[V]$ and $\F[U]$-modules respectively. It turns out that the possible pairs of ranks are $(1,1), (2, 0), (0,2), (4,0), (0,4)$, and $(0,0)$ corresponding to types $C$, $S_h$, $S_v$, $S_h\oplus S_h$, $S_v \oplus S_v$ and $0$ up to $\sim$.
\end{proof}
The main reason for our interest in Lemma \ref{lemma:sim relations} is that among all combinations of products of complexes of different types, there is only one way to obtain a standard complex -- via a tensor product of two standard complexes. This observation leads us to define an even more coarse equivalence relation $\approx$ on the set of finitely generated free chain complexes over $\mathcal{R}_1$.
\begin{definition}
Let $C_1$ and $C_2$ be finitely generated free chain complexes over $\mathcal{R}_1$. Define $C_1 \approx C_2$ if $C_1 \oplus D_1 \cong C_2 \oplus D_2$ for some direct sums of horizontal snake complexes, vertical snake complexes, and local systems $D_1$ and $D_2$.
\end{definition}
As $\sim$ above, $\approx$ is an equivalence relation and we think of its equivalence classes as multisets of standard complexes.
\begin{definition}
    Let $n \in \N_0$ and let $a_1, \dots, a_{2n}$ be a sequence of nonzero integers. Let $C = C(a_1, \dots, a_{2n})$ be a standard complex with the basis $\{x_0, \dots, x_{2n}\}$. Define
    \begin{itemize}[label = $-$]
        \item $\tau(C) = -\frac{1}{2}(\gr_V(x_0) - \gr_V(x_n))$,
        \item $\epsilon(C) = \mathrm{sign}(a_1)$, unless $n=0$ in which case $\epsilon(C)=0$, and 
        \item $\phi_j(C) = |\{ i \ | \ i \text{ odd and } a_i = j \}| - |\{ i \ | \ i \text{ odd and } a_i = -j \}|$.
    \end{itemize}
\end{definition}
These definitions might seem opaque and unmotivated at first, but they become much clearer when one draws a picture of a standard complex and interprets them geometrically. For example, starting at $x_0$ and moving towards higher generators, $\epsilon(C)$ denotes the direction of the first arrow and $\phi_7(C)$ denotes the signed number of horizontal arrows of length $7$. See also Section \ref{sec:example} for the calculation of these invariants for a particular standard complex corresponding to the torus knot $T_{3,4}$.

\begin{definition}\label{def:invariants tau, epsilon, phi}
    Let $K \subset S^3$ be a knot, $\CH_{\mathcal{R}_1}(K)$ its full Khovanov-Rozansky complex and $C_1, \dots, C_n$ the unique standard complexes such that $\CH_{\mathcal{R}_1}(K) \approx C_1 \oplus \dots \oplus C_n$. Define
    \begin{itemize}[label=$-$]
        \item $\tau^H(K) = \frac{1}{n}\sum_{i=1}^n \tau(C_i)$,
        \item $\epsilon^H(K) = \frac{1}{n}\sum_{i=1}^n \epsilon(C_i)$, and
        \item $\phi_j^H(K) = \frac{1}{n}\sum_{i=1}^n \phi_j(C_i)$ for all $j\in\N$.
    \end{itemize} 
\end{definition}
Note that the newly defined invariants take values in $\Q$ rather than $\Z$ as in knot Floer homology. This definition is essentially all of the content of Theorem \ref{thm:invariants}.
\begin{remark}
    We decided to define $\tau(C) = -\frac{1}{2}(\gr_V(x_0) - \gr_V(x_n))$ using $\gr_V$. This seems like an arbitrary choice and one could wonder if a new invariant is obtained by defining $\tau'(C) = -\frac{1}{2}(\gr_U(x_0) - \gr_U(x_n))$ instead. Indeed, on each individual standard complex $C$, the values $\tau(C)$ and $\tau'(C)$ generally differ. However, the Khovanov-Rozansky chain complex $\CH_{\mathcal{R}_1}(K)$ is symmetric and as such contains the summand $C(-a_n, \dots, -a_1)$ for each asymmetric summand $C(a_1, \dots, a_n)$. The effect of this symmetry on $\tau$ and $\tau'$ is that $\tau(C(a_1, \dots, a_n)) = -\tau'(C(-a_n, \dots, -a_1))$. Since $\tau^H(K)$ takes into account all standard complexes, we obtain $\tau^H(K) = -\tau'^H(K)$ and both invariants are in fact equivalent.
\end{remark}
Finally, we establish that the newly defined invariants behave nicely under the operation of connected sum.
\begin{proposition}\label{prop:additivity of invariants}
Let $\mathcal{C} = (\{ K \subset S^3 \}, \#)$ be the commutative monoid of knots under the operation of connected sum. Then $\tau^H$ and $\phi^H_j: \mathcal{C} \to \Q$ are monoid homomorphisms for all $j \in \N$. Equivalently, let $K_1, K_2 \subset S^3$ be knots. Then
\begin{itemize}[label = $-$]
    \item $\tau^H(K_1\#K_2) = \tau^H(K_1) + \tau^H(K_2)$ and
    \item $\phi_j^H(K_1\#K_2) = \phi_j^H(K_1) + \phi_j^H(K_2)$ for all $j \in \N$.
\end{itemize}
\end{proposition}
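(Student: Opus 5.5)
Using Theorem \ref{thm:connected sums}, tensor over $\mathcal{R}_1$ to get $\CH_{\mathcal{R}_1}(K_1\#K_2)\cong \CH_{\mathcal{R}_1}(K_1)\otimes_{\mathcal{R}_1}\CH_{\mathcal{R}_1}(K_2)$. Write $\CH_{\mathcal{R}_1}(K_1)\approx C_1\oplus\dots\oplus C_n$ and $\CH_{\mathcal{R}_1}(K_2)\approx C'_1\oplus\dots\oplus C'_{n'}$, where the full splittings come from Theorem \ref{thm:HOMFLY splitting}. Distributing the tensor product over these direct sums gives products of pairs of summands. By Lemma \ref{lemma:sim relations}, the only products that can contribute a standard complex up to $\approx$ are the products $C_a\otimes C'_b$ of two standard complexes, and each of these contributes exactly one standard complex. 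Every other product is $\sim$-equivalent to horizontal or vertical snake complexes, or to $0$, so it is absorbed into $\approx$. By the uniqueness part of Theorem \ref{thm:HOMFLY splitting}, the multiset of standard complexes of $K_1\#K_2$ is therefore $\{\widetilde{C_a\otimes C'_b}\}_{a,b}$, which has $nn'$ elements. Here $\widetilde{C\otimes C'}$ denotes the unique standard complex with $C\otimes C'\sim \widetilde{C\otimes C'}$.

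The key step is then additivity at the level of single standard complexes:
\[
\tau(\widetilde{C\otimes C'})=\tau(C)+\tau(C') \quad\text{and}\quad \phi_j(\widetilde{C\otimes C'})=\phi_j(C)+\phi_j(C').
\]
Once this holds, averaging gives
\[
\tau^H(K_1\#K_2)=\frac{1}{nn'}\sum_{a,b}\bigl(\tau(C_a)+\tau(C'_b)\bigr)=\tau^H(K_1)+\tau^H(K_2),
\]
and the same computation works for $\phi^H_j$. Note that $\epsilon$ is not additive in this way, which is consistent with the statement.

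To prove the single-complex additivity I would import the corresponding knot Floer facts: $\tau$ and $\phi_j$ are homomorphisms on the group of standard complexes modulo local equivalence, as shown by Dai--Hom--Stoffregen--Truong \cite{dai2021more}. Forgetting $\gr_A$ reduces the statement to their algebraic setting over $\mathcal{R}_1$, or to its snake and local-system extension in \cite{popovic2023link}. There, $\tau$ can be read off homologically as $-\tfrac12$ of the $\gr_V$-grading of the generator of $H_*(C/U)$ modulo $V$-torsion, relative to $x_0$. This is additive by the Künneth formula, because the non-torsion parts of $H_*(C/U)$ and $H_*(C'/U)$, each isomorphic to $\F[V]$, tensor to give the non-torsion part of $H_*((C\otimes C')/U)$.

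The main obstacle is that the DHST additivity is stated for \emph{local equivalence}, not for honest splitting. I therefore need to check that the standard summand $\widetilde{C\otimes C'}$ of an actual direct-sum decomposition agrees with the standard representative of the local class of $C\otimes C'$. My approach would be to argue that all the other summands (snakes and local systems) have torsion homology after inverting $U$ or $V$. Then the inclusion and projection of the standard summand are local maps in both directions. Since standard complexes are uniquely determined by their local class (\cite{dai2021more}), $\widetilde{C\otimes C'}$ is the local representative, and the additivity of $\tau$ and $\phi_j$ follows. The third grading plays no role beyond being carried along, since it only constrains the complexes further.
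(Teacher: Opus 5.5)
Your proposal is correct and follows the paper's proof. Lemma \ref{lemma:sim relations} identifies the $nn'$ standard summands of $\CH_{\mathcal{R}_1}(K_1\#K_2)$ with the products $C_a\otimes C'_b$, additivity of $\tau$ and $\phi_j$ on single standard complexes comes from \cite{dai2021more}, and averaging finishes the argument.

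Your extra check that the standard summand of $C_a\otimes C'_b$ is its local-equivalence representative is something the paper passes over, and it does go through. It works only because the rank pair $(1,1)$ forces the complement to consist of local systems alone, so your parenthetical ``snakes and local systems'' is inaccurate: a horizontal (resp.\ vertical) snake complex has nonzero homology after inverting $V$ (resp.\ $U$). Also, homologically $\tau(C)$ compares the $\gr_V$-gradings of two tower generators, $x_0$ for $H_*(C/U)$ and the last basis element for $H_*(C/V)$. Your K\"unneth argument therefore has to be run on both quotients, not on $C/U$ alone.
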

\begin{proof}
By Theorem \ref{thm:HOMFLY splitting}, let $C_1, \dots, C_n$ and $C_1', \dots, C_m'$ be the unique standard complexes such that $\CH_{\mathcal{R}_1}(K_1) \approx C_1 \oplus \dots \oplus C_n$ and $\CH_{\mathcal{R}_1}(K_2) \approx C_1' \oplus \dots \oplus C_m'$. By Lemma \ref{lemma:sim relations}, all standard complex summands of $\CH_{\mathcal{R}_1}(K_1\# K_2) \cong \CH_{\mathcal{R}_1}(K_1) \otimes_{\mathcal{R}_1} \CH_{\mathcal{R}_1}(K_2)$ arise from the tensor products $C_i \otimes_{\mathcal{R}_1} C_k'$ for some $i \in \{1, \dots, n\}$ and $k \in \{1, \dots, m\}$. There are $nm$ of them. The quantities $\tau(C)$ and $\phi_j(C)$ were proven to be additive by \cite[Theorem 7.2]{dai2021more}, \emph{i.e.} they satisfy $\tau(C_i \otimes_{\mathcal{R}_1} C_k') = \tau(C_i) + \tau(C_k')$ and $\phi_j(C_i \otimes_{\mathcal{R}_1} C_k') = \phi_j(C_i) + \phi_j(C_k')$ for all $j$. Let us first consider $\tau$ and note that $nm\ \tau^H(K_1 \# K_2)$ is by definition the sum of all $\tau(C_i\otimes_{\mathcal{R}_1} C_k')$. In other words, we can calculate $nm\ \tau^H(K_1 \# K_2) = \sum_{i,k} \tau(C_i \otimes C_k') = \sum_i m\tau(C_i) + \sum_k n\tau(C_k') = m\tau^H(K_1) + n\tau^H(K_2)$ using additivity of $\tau$. Dividing the equation by $nm$ yields $\tau^H(K_1\# K_2) = \tau^H(K_1) + \tau^H(K_2)$ as required. The only essential ingredient of this argument was that $\tau$ is additive, so replacing $\tau$ by $\phi_j$ everywhere establishes the claim for all $\phi_j^H$ as well.
\end{proof}
We can now deduce the last main result of this paper.
\begin{proof}[Proof of Theorem \ref{thm:invariants}]
The invariants $\tau^H$, $\epsilon^H$, and $\phi_j^H$ for all $j \in \N$ are defined in Definition \ref{def:invariants tau, epsilon, phi}. The fact that they are well-defined follows from uniqueness of the splitting in Theorem \ref{thm:HOMFLY splitting}. Proposition \ref{prop:additivity of invariants} shows that the invariants $\tau^H$ and $\phi_j^H$ are additive.
\end{proof}
It remains to be seen whether $\tau^H$, $\epsilon^H$, and $\phi^H_j$ for all $j \in \N$ are concordance invariants. This would mean that $\tau^H$ and $\phi_j^H$ are concordance homomorphisms.
\begin{remark}
The strategy used in this paper to define the Khovanov-Rozansky versions of $\tau$, $\epsilon$, and $\phi_j$ can be applied in knot Floer homology to extend them to null-homologous links in arbitrary $3$-manifolds. 
\end{remark}
\begin{remark}
If Rasmussen's $E(1)$ spectral sequence from $\widehat{\mathit{H}}(K)$ to $\mathfrak{sl}_1$ homology collapses, then we are in the setting of Proposition \ref{prop:HOMFLY width 1} and the new invariants are all determined by $\tau^H(K)$ via $\epsilon^H(K) = \mathrm{sign}(\tau^H(K))$, $\phi_1^H(K) = \tau^H(K)$, and $\phi^H_j(K) = 0$ for all $j \geq 2$.
\end{remark}

\section{Example}\label{sec:example}
This section contains the full version of Khovanov-Rozansky homology of the $T_{3,4}$ torus knot together with the calculations of invariants $\tau^H$, $\epsilon^H$ and $\phi_j^H$. The form of its reduced Khovanov-Rozansky homology together with higher differentials was conjectured in \cite[Figure 3.7]{dunfield2006superpolynomial} and proven in \cite{rasmussen2016some}. In our pictorial representation, its full Khovanov-Rozansky complex is depicted in Figure \ref{fig:T(3,4) complicated}. We can perform a change of basis over $\F[U,V]$ to transform it into the complex depicted in Figure \ref{fig:T(3,4) simple}. Note that this form consists of a standard complex $C(1, -1, 1, -1, 1, -1)$ and a single $1 \times 1$ square, which is in line with Proposition \ref{prop:HOMFLY width 1}. 
\begin{figure}[t]
    \begin{subfigure}[b]{0.48\textwidth}
        \centering
        \begin{tikzpicture}[scale=1.4]
            \draw[step=1.0,gray,thin] (-0.5,-3.5) grid (3.5,0.5);
            \def\a{0.08}
            \draw[black, very thick] (0,0) -- (1,0) -- (1,-1) -- (2-\a,-1-\a) -- (2, -2) -- (3, -2) -- (3, -3);
            \draw[black, very thick] (3,0) -- (2,0) -- (2+\a,-1+\a) -- (3,-1) -- (3,0);

            \draw[black, very thick] (0,0) to[out=15, in=165] (2,0){};
            \draw[black, very thick] (1,0) to[out=15, in=165] (3,0){};
            \draw[black, very thick] (1,-1) to[out=20, in=160] (3,-1){};
            
            \draw[black, very thick] (3,0) to[out=-75, in=75] (3,-2){};
            \draw[black, very thick] (3,-1) to[out=-75, in=75] (3,-3){};
            \draw[black, very thick] (2,0) to[out=-70, in=70] (2,-2){};

            \draw[black, very thick] (2,0) -- (1, -1) {};
            \draw[black, very thick] (2,-2) -- (3, -1) {};
            \draw[black, very thick] (3,0) to[out=200, in=85] (2-\a, -1-\a) {};

            \filldraw[black] (0,0) circle (2pt) node[anchor=north west]{};
            \filldraw[black] (1,0) circle (2pt) node[anchor=north west]{};
            \filldraw[black] (1,-1) circle (2pt) node[anchor=north west]{};
            \filldraw[black] (2-\a,-1-\a) circle (2pt) node[anchor=north west]{};
            \filldraw[black] (2,-2) circle (2pt) node[anchor=north west]{};
            \filldraw[black] (3,-2) circle (2pt) node[anchor=north west]{};
            \filldraw[black] (3,-3) circle (2pt) node[anchor=north west]{};
            
            \filldraw[black] (3,0) circle (2pt) node[anchor=north west]{};
            \filldraw[black] (2,0) circle (2pt) node[anchor=north west]{};
            \filldraw[black] (2+\a,-1+\a) circle (2pt) node[anchor=north west]{};
            \filldraw[black] (3,-1) circle (2pt) node[anchor=north west]{};
        \end{tikzpicture}
        \caption{}
        \label{fig:T(3,4) complicated}
    \end{subfigure}
    \begin{subfigure}[b]{0.48\textwidth}
        \centering
        \begin{tikzpicture}[scale=1.4]
            \draw[step=1.0,gray,thin] (-0.5,-3.5) grid (3.5,0.5);
            \def\a{0.08}
            \draw[black, very thick] (0,0) -- (1,0) -- (1,-1) -- (2-\a,-1-\a) -- (2, -2) -- (3, -2) -- (3, -3);
            \draw[black, very thick] (3,0) -- (2,0) -- (2+\a,-1+\a) -- (3,-1) -- (3,0);
            
            \filldraw[black] (0,0) circle (2pt) node[anchor=north west]{$x_0$};
            \filldraw[black] (1,0) circle (2pt) node[anchor=north west]{};
            \filldraw[black] (1,-1) circle (2pt) node[anchor=north west]{};
            \filldraw[black] (2-\a,-1-\a) circle (2pt) node[anchor=north west]{};
            \filldraw[black] (2,-2) circle (2pt) node[anchor=north west]{};
            \filldraw[black] (3,-2) circle (2pt) node[anchor=north west]{};
            \filldraw[black] (3,-3) circle (2pt) node[anchor=north west]{$x_6$};
            
            \filldraw[black] (3,0) circle (2pt) node[anchor=north west]{};
            \filldraw[black] (2,0) circle (2pt) node[anchor=north west]{};
            \filldraw[black] (2+\a,-1+\a) circle (2pt) node[anchor=north west]{};
            \filldraw[black] (3,-1) circle (2pt) node[anchor=north west]{};
        \end{tikzpicture}
        \caption{}
        \label{fig:T(3,4) simple}
    \end{subfigure}
    \caption{The full Khovanov-Rozansky complex $\CH(T_{3,4})$ is shown in (\textsc{a}). Through a change of basis, it can be shown to be isomorphic to the complex depicted in (\textsc{b}), which splits into a standard complex and a local system. This can be used to calculate invariants $\tau^H(T_{3,4})=3$, $\epsilon^H(T_{3,4})=1$, $\phi_1(T_{3,4})=3$, and $\phi_j(T_{3,4})=0$ for $j \geq 2$.}
    \label{fig:T(3,4) HOMFLY complex}
\end{figure}

The splitting of $\CH_{\mathcal{R}_1}(T_{3,4})$ into standard complexes and local systems can also be used to calculate the invariants $\tau^H$, $\epsilon^H$ and $\phi_j^H$. We have $\tau^H(\CH(T_{3,4})) = 3$. With notation as in Figure \ref{fig:T(3,4) simple}, this can be calculated by noting that $\gr_V(x_6)=0$ by Definition \ref{def:standard complex}, because $x_6$ is a horizontal homology generator. Together with $\gr_V(V)=-2$, and $\gr_V(\partial)=-1$, this is sufficient information to establish that $\gr_V(x_0) = -6$ and hence $\tau^H(\CH(T_{3,4})) = -\frac{1}{2} \gr_V(x_0) = 3$ as required. Similarly, we have $\epsilon^H(\CH(T_{3,4})) = 1$ since there is only one standard complex and it starts with a positive number. Finally, we have $\phi_1^H(\CH(T_{3,4})) = 3$ since there are three horizontal arrows of signed length $1$ and no horizontal arrows of signed length $-1$. There are no longer arrows so $\phi_j^H(\CH(T_{3,4}))=0$ for all $j \geq 2$.

\bibliography{mybib.bib}
\bibliographystyle{alpha}

\end{document}

%% file: mymacros.tex
\usepackage{amsmath}
\usepackage{amsfonts}
\usepackage{amssymb}
\usepackage{amsthm}
\usepackage{mathtools}
\usepackage{tikz}
\usepackage{tikz-cd}
\usepackage{hyperref}
\usepackage{enumitem}
\usepackage{graphicx}
\usepackage{subcaption}

\usetikzlibrary{arrows}
\usetikzlibrary{decorations.pathreplacing}
\usetikzlibrary{calligraphy}
\usetikzlibrary{external}

\tikzset{knot/.style={black,very thick,preaction={draw,white,line width=5pt}}}

\colorlet{dark green}{green!50!black}

\DeclareMathOperator{\ad}{ad}

\newcommand{\F}{\ensuremath{\mathbb{F}} }
\newcommand{\N}{\ensuremath{\mathbb{N}} }
\newcommand{\Z}{\ensuremath{\mathbb{Z}} }
\newcommand{\Q}{\ensuremath{\mathbb{Q}} }

\newcommand{\gr}{\mathrm{gr}}
\newcommand{\CFK}{\mathit{CFK}}
\newcommand{\HFK}{\mathit{HFK}}

\newcommand{\HF}{\mathit{HF}}

\newcommand{\CH}{\mathit{CH}}

\newtheorem{theorem}{Theorem}[section]

\newtheorem{proposition}[theorem]{Proposition}
\newtheorem{conjecture}[theorem]{Conjecture}

\newtheorem{lemma}[theorem]{Lemma}

\theoremstyle{definition}
\newtheorem{definition}[theorem]{Definition}

\theoremstyle{remark}
\newtheorem*{remark}{Remark}
\newtheorem{example}[theorem]{Example}

%% file: images/finite_CH.tex
\begin{figure}[t]
    \centering
    \begin{subfigure}{0.20\textwidth}
        \centering
        \begin{tikzpicture}[scale=1.5]
            \draw[step=1.0,gray,thin] (-0.5,-0.5) grid (1.5,1.5);
            \draw[black, very thick] (1,0) -- (0,0) -- (0,1);
            \filldraw[black] (0,0) circle (2pt) node[anchor=north west]{$b$};
            \filldraw[black] (1,0) circle (2pt) node[anchor=north west]{$a$};
            \filldraw[black] (0,1) circle (2pt) node[anchor=north west]{$c$};
        \end{tikzpicture}
        \caption{}
        \label{fig:CH(trefoil)}
    \end{subfigure}
    \hspace{1cm}
    \begin{subfigure}{0.20\textwidth}
        \centering
        \begin{tikzpicture}[scale=1.5]
            \draw[step=1.0,gray,thin] (-0.5,-0.5) grid (1.5,1.5);
            \def\a{0.12}
            \draw[black, very thick] (1,0) -- node[below]{$-1$} (0,0) -- (0,1) -- (1,1) -- (1,0);
            \filldraw[black] (0,0) circle (2pt) node[anchor=north east]{$b$};
            \filldraw[black] (1,0) circle (2pt) node[anchor=north west]{$c$};
            \filldraw[black] (0,1) circle (2pt) node[anchor=south east]{$a$};
            \filldraw[black] (1,1) circle (2pt) node[anchor=south west]{$d$};
            \filldraw[black] (1-\a,1-\a) circle (2pt) node[anchor=north east]{$e$};
        \end{tikzpicture}
        \caption{}
        \label{fig:CH(figure 8 knot)}
    \end{subfigure}
    \caption{The Khovanov-Rozansky chain complex $\CH(T_{2,-3})$ is depicted in ($\textsc{a}$). The differential satisfies $\partial a = Ub$ due to the horizontal arrow of length $1$ connecting $a$ and $b$. Similarly, $\partial c = Vb$ due to the vertical arrow of length $1$ connecting $c$ and $b$. The absolute grading information cannot be inferred from the picture, but the relative grading information is preserved on each connected component. For example, since $\gr(U) = (-2,0,0)$ and $\gr(\partial) = (-1,-1,-2)$, one can deduce that $\gr(a)-\gr(b) = \gr(U)-\gr(\partial) = (-1,1,2)$. The full Khovanov-Rozansky complex $\CH(4_1)$ is shown in ($\textsc{b}$). We have $\partial c = -Ub$ since the arrow from $c$ to $b$ is decorated with a $-1$.}
    \label{fig:finite CH}
\end{figure}

%% file: images/22.tex
\begin{figure}[t]
    \centering
    \begin{tikzpicture}[scale=1.3]
            \def\a{0.10}
            \draw[step=1.0,gray,thin] (0.5,0.5) grid (3.5,3.5);
            \draw[very thick, dashed] (3,3) to["$d_{1|0}$", swap] (2,3);
            \draw[very thick, dashed] (2,3) -- (1,1) node[yshift=20, xshift=-1]{$d_{1|2}$};
            \filldraw[black] (2,3) circle (2pt) node[anchor=north west]{$a$};
            \filldraw[black] (1,3) circle (2pt);
            \filldraw[black] (1,2) circle (2pt);
            \filldraw[black] (2,2) circle (2pt);
            \filldraw[black] (3,2) circle (2pt);
            \filldraw[black] (2,1) circle (2pt);
            \draw[fill=white] (1,1) circle (2pt) node[anchor=north west]{$y$};
            \draw[fill=white] (3,3) circle (2pt) node[anchor=north west]{$x$};
            \filldraw[black] (3,1) circle (2pt) node[anchor=north west]{$m_1=2$};
            \draw[very thick] (1-\a, 3-\a) to[out=135, in=225] (1-\a, 3+\a);
            \draw[very thick] (1-\a, 3+\a) to[out=45, in=135] (1+\a, 3+\a);
            \draw[very thick] (3-\a, 1-\a) to[out=315, in=225] (3+\a, 1-\a);
            \draw[very thick] (3+\a, 1-\a) to[out=45, in=315] (3+\a, 1+\a);
            \draw[very thick] (1-\a, 3-\a) to (3-\a,1-\a);
            \draw[very thick] (1+\a, 3+\a) to (3+\a,1+\a);
        \end{tikzpicture}
    \caption{Terms in Example \ref{example:22} correspond to black dots. The proof shows that the three terms on the diagonal $m_1=2$ sum to $0$ and the remaining four terms from diagonals $m_1=1$ and $m_1=3$ sum to $0$.}
    \label{fig:22}
\end{figure}

%% file: images/example_of_a_standard_complex.tex
\begin{figure}[t]
    \centering
    \begin{subfigure}[b]{0.25\textwidth}
        \centering
        \begin{tikzpicture}[scale=0.8]
            \draw[step=1.0,gray,thin] (-0.5,1.5) grid (2.5,4.5);
            \draw[black, very thick] (0,2) -- (1,2) -- (1,4) -- (2,4);
            \filldraw[black] (0,2) circle (2pt) node[anchor=north west]{$x_0$};
            \filldraw[black] (1,2) circle (2pt) node[anchor=north west]{$x_1$};
            \filldraw[black] (1,4) circle (2pt) node[anchor=north west]{$x_2$};
            \filldraw[black] (2,4) circle (2pt) node[anchor=north west]{$x_3$};
        \end{tikzpicture}
        \caption{}
        \label{fig:example of a horizontal snake cx}
    \end{subfigure}
    \begin{subfigure}[b]{0.25\textwidth}
        \centering
        \begin{tikzpicture}[scale=0.8]
            \draw[step=1.0,gray,thin] (-0.5,1.5) grid (2.5,4.5);
            \draw[black, very thick] (0,2) -- (0,3) -- (2,3) -- (2,4);
            \filldraw[black] (0,2) circle (2pt) node[anchor=north west]{$x_0$};
            \filldraw[black] (0,3) circle (2pt) node[anchor=north west]{$x_1$};
            \filldraw[black] (2,3) circle (2pt) node[anchor=north west]{$x_2$};
            \filldraw[black] (2,4) circle (2pt) node[anchor=north west]{$x_3$};
        \end{tikzpicture}
        \caption{}
        \label{fig:example of a vertical snake cx}
    \end{subfigure}
    \begin{subfigure}[b]{0.35\textwidth}
        \centering
        \begin{tikzpicture}[scale=0.8]
            \draw[step=1.0,gray,thin] (-0.5,1.5) grid (4.5,4.5);
            \draw[black, very thick] (0,4) -- (2,4) -- (2,2) -- (1,2) -- (1,3) -- (4,3) -- (4,2);
            \filldraw[black] (0,4) circle (2pt) node[anchor=north west]{$x_0$};
            \filldraw[black] (2,4) circle (2pt) node[anchor=north west]{$x_1$};
            \filldraw[black] (2,2) circle (2pt) node[anchor=north west]{$x_2$};
            \filldraw[black] (1,2) circle (2pt) node[anchor=north west]{$x_3$};
            \filldraw[black] (1,3) circle (2pt) node[anchor=north west]{$x_4$};
            \filldraw[black] (4,3) circle (2pt) node[anchor=north west]{$x_5$};
            \filldraw[black] (4,2) circle (2pt) node[anchor=north west]{$x_6$};
        \end{tikzpicture}
        \caption{}
        \label{fig:example of a standard cx}
    \end{subfigure}
    \caption{A horizontal snake complex $S_h(1, 2, 1)$ is shown in ($\textsc{a}$), a vertical snake complex $S_v(1,2,1)$ in ($\textsc{b}$), and the standard complex $C(2, -2, -1, 1, 3, -1)$ in ($\textsc{c}$).}
    \label{fig:example complexes}
\end{figure}